\newif\ifcd
\title[Line arrangements with non-isomorphic group]{An arithmetic Zariski pair of line arrangements
with non-isomorphic fundamental group}
\author[E. Artal]{Enrique Artal Bartolo}
\address{Departamento de Matem\'aticas, IUMA\\ 
Universidad de Zaragoza\\ 
C.~Pedro Cerbuna 12\\ 
50009 Zaragoza, Spain} 
\email{artal@unizar.es} 
\author[J.I. Cogolludo]{Jos{\'e} Ignacio Cogolludo-Agust{\'i}n}
\address{Departamento de Matem\'aticas, IUMA\\ 
Universidad de Zaragoza\\ 
C.~Pedro Cerbuna 12\\ 
50009 Zaragoza, Spain} 
\email{jicogo@unizar.es} 
\author[B. Guerville]{Beno{\^i}t Guerville-Ball{\'e}}
\address{Insitut Joseph Fourier\\ 
UMR 5582 CNRS-UJF\\ 
100 rue des Math{\'e}matiques\\
BP 74 38 402 Saint-Martin-d'H{\`e}res Cedex, France} 
\email{benoit.guerville-balle@math.cnrs.fr}
\author[M. Marco]{Miguel Marco-Buzun{\'a}riz}
\address{Departamento de Matem\'aticas\\ 
Universidad de Zaragoza\\ 
C.~Pedro Cerbuna 12\\ 
50009 Zaragoza, Spain} 
\email{mmarco@unizar.es}
\thanks{First, second and fourth author are partially supported by
MTM2013-45710-C2-1-P. Third author is partially supported
by JSPS-MAE Sakura program}  
\subjclass[2010]{14N20,32S22,14F35, 14H50, 14F45,14G32}  
\keywords{Line arrangements, Zariski pairs, number fields, fundamental group}
\begin{document}

\begin{abstract}
In a previous work, the third named author found a combinatorics of line arrangements whose realizations
live in the cyclotomic group of the fifth roots of unity and such that their non-complex-conjugate embedding
are not topologically equivalent in the sense that they are not embedded
in the same way in the complex projective plane. That work does not imply that the complements
of the arrangements are not homeomorphic. In this work we prove that the fundamental groups of the complements
are not isomorphic. It  provides the first example of a pair of Galois-conjugate plane curves such that
the fundamental groups of their complements are not isomorphic (despite the fact that they have 
isomorphic profinite completions).
\end{abstract}

\maketitle

\section*{Introduction}

The relationship between
topology and combinatorics is an important aspect in the study of hyperplane arrangements. 
As the main positive result, we have the fact that for a hyperplane arrangement
(say, in a complex projective space), the cohomology ring of the
complement is determined by the combinatorial data~\cite{orso:80}.
In~1994 G.~Rybnikov~\cite{ry:11-98} found a pair of line arrangements in 
$\mathbb{P}^2\equiv\mathbb{P}^2(\mathbb{C})$ such that their fundamental
groups are not isomorphic. In the long period between the
announcement and the publication of the result, Rybnikov's  statement
was reproved by three of the authors of this paper (with J.~Carmona) in~\cite{accm:03a},
using group extensions involving the Alexander invariant of the group and
a combinatorial generalization of some Rybnikov's ideas by the fourth named author~\cite{mmarco-resonance}.
Note that Rybnikov's arrangements cannot be the complexification of a real arrangement.

Some years later, a new example of arrangements sharing the combinatorics but not all of the 
topological properties was found~\cite{accm}. This example was weaker than Rybnikov's one in
some aspects and somewhat stronger in other ones. The author distinguished the two arrangements
using a braid monodromy invariant developed in \cite{acc:01a}; this invariant is able to distinguish
two pairs formed by the complex projective plane and an algebraic curve but it does not give 
further information on the topology of the complement (in particular, on the fundamental group).
These arrangements are defined over $\mathbb{Q}(\sqrt{5})$ (in particular they are the complexification
of a real arrangement) and, moreover, they have Galois-conjugate equations in this field.
As a consequence the fundamental groups of their complements have the same profinite completion
and they share all the topological properties of algebraic nature (quite a lot!).

Recently, the third named author~\cite{bgb:14a} found another combinatorics with distinct topological realizations.
He used a new invariant developed in~\cite{ea:tls,afg:14} computing the image of some special homological
cycle in the complement by some associated character; this computation is doable using the injection
of the boundary of a regular neighborhood of the arrangement in the complement, done in~\cite{fgm}.
In this case the realizations are defined over the cyclotomic group of the fifth roots of unity
and only the topology of the pairs is distinguished by this invariant as in the latter example.
Up to our knowledge, these are (essentially) the only known examples of line arrangements
with the same combinatorics and distinct topology.

The main result of this paper is that in fact, the fundamental group of the realizations of
the above combinatorics are not isomorphic (of course, for the realizations which are not complex-conjugate).
We have used the same method which was already successful in~\cite{accm:03a}.
The notion of combinatorics can be generalized for arbitrary plane algebraic curves, see~\cite{ea:jag}:
a pair of curves with the same combinatorics and distinct topology is called a \emph{Zariski pair}.
A Zariski pair is said to be \emph{arithmetic} if its members have Galois-conjugate equations over a number
field. As we stated before, it is hard to check if a Zariski pair candidate is an actual Zariski pair;
as an example there are arithmetic Zariski pairs candidate communicated to the authors by Fan for which 
the known invariants failed to check if they are actual arithmetic Zariski pairs.

The arithmetic Zariski pairs are related to conjugate varieties. The fact that conjugate varieties
may have distinct topology was proved by Serre~\cite{se:64}; other examples were found by
Abelson~\cite{ab:74} and more recently by many authors~\cite{charles:09,rajan,gd-tg}. 
For the case of curves, the first example of an arithmetic Zariski pair is in~\cite{acc:01b} (a curve
of degree~12). The complete list of such Zariski pairs for sextics was done by Shimada~\cite{shi:08,shi:09};
a longer list (replacing homeomorphism by a special type of diffeomorphism) was given by
Degtyarev~\cite{degt:08}. In all these examples, nothing is said about the topology of the complement;
in fact, in~\cite{act:08} a Zariski pair with homeomorphic complements is given. Hence,
our example is the first arithmetic Zariski pair with non-isomorphic fundamental group.

The paper is organized as follows. In~\S\ref{sec-combinatorics}, we introduce the combinatorics 
$\AG$ studied
in the paper. In~\S\ref{sec-rigid}, we study the homological rigidity of this combinatorics. The
fact that $\AG$ is homologically rigid and has no non-trivial automorphism implies that given
two realizations $\mathcal{A}_i$, $i=1,2$, any isomorphism 
$\Phi:\pi_1(\mathbb{P}^2\setminus\mathcal{A}_1)\to\pi_1(\mathbb{P}^2\setminus\mathcal{A}_2)$ 
is \emph{$\pm$ the identity in homology}. In \S\ref{sec-alex}, we introduce the truncated Alexander
invariants. In \S\ref{sec-fg}, we compute the fundamental groups using the wiring diagrams
computed in~\cite{bgb:14a} and we explain how to compute the truncated Alexander invariants.
We show that no isomorphism
$\Phi:\pi_1(\mathbb{P}^2\setminus\mathcal{A}_1)\to\pi_1(\mathbb{P}^2\setminus\mathcal{A}_2)$ 
can induced the identity on homology and using a conjugate arrangement, we do the same for its opposite,
proving the result. In the Appendix~\ref{sec-code}, we explain the \texttt{Sagemath} code that makes 
the heavy computations.

\section{Combinatorics and realizations}
\label{sec-combinatorics}
In this section a line combinatorics will be described together with several possible realizations 
on the complex projective plane.

\subsection{Line combinatorics}
\mbox{}

For the sake of completeness we will recall the definitions of line combinatorial types.
\begin{dfn}\label{def-comb}
A \emph{line combinatorial type} (or simply a \emph{(line) combinatorics}) is a pair
$\scc:=(\cL,\cP)$, where $\cL$ is a finite set and $\cP\subset \cP(\cL)$, satisfying:
\begin{enumerate}
\item 
For all $P\in\cP$, $\#\cP\geq 2$;
\item
For any $L_1,L_2\in \cL$, $L_1\neq L_2$, $\exists!P\in \cP$ such that $L_1,L_2\in P$. 
\end{enumerate}
An {\it ordered combinatorial type} $\scc^{\text{ord}}$ 
is a combinatorial type where $\cL$ is an ordered set.
\end{dfn}

\begin{ntc}
\label{not-comb-type}
Given a combinatorial type $\scc$, the multiplicity $m_P$ of $P\in\cP$ is the number 
of elements $L\in\cL$ such that $P\in L$; note that $m_P\geq 2$. In order to mimic the
situation in the realizations of a combinatorics, we will write $P\in L$
when the \emph{line}~$P$ is in the \emph{point}~$P$; this relation will also be indicated
as $L<P$.
\end{ntc}

\subsection{The combinatorics \texorpdfstring{$\AG$}{G91}}
\mbox{}

The combinatorics (and realizations) described here were originally presented in~\cite{bgb:14a}.
In what follows, a more geometrically insightful approach will be presented. This will be helpful
in order to describe the required invariants to compare their fundamental groups.

Let us begin with an ordered set of four points $P_1,\dots,P_4\in\mathbb{P}^2$ in general position
(red points in \autoref{fig:12lines}). 
Note that such a set is unique up to projective automorphism:
$$
P_1,\dots,P_4=[1:1:1],[1:-1:1],[1:-1:-1],[1:1:-1].
$$

The first four lines of our arrangement are the lines $L_1,\dots,L_4$ of the quadrangle defined by 
$P_1,\dots,P_4$:
$$
L_1:x-z=0,\quad L_2:x+y=0,\quad L_3:x+z=0,\quad L_4:x-y=0.
$$
The two diagonals of the quadrangle should also be considered: $L_5$ is the line $\overline{P_1 P_3}$
and $L_6$ is the line $\overline{P_2 P_4}$. Their equations are:
$$
L_5:y-z=0,\quad L_6:y+z=0.
$$
After choosing two points $P_5$ (resp. $P_6$) on $L_5$ (resp. $L_6$) one can consider a set of four lines 
$L_9:=\overline{P_1P_6},L_{10}:=\overline{P_2P_5},L_7:=\overline{P_3P_6},L_8:=\overline{P_4P_5}$ 
as shown in Figure~\ref{fig:12lines}. The intersection of these lines with the original lines $L_1,...,L_4$
define a set of points as follows: $Q_i=L_i\cap L_{i+6}$, $i=1,...,4$.

In principle, the newly defined points $Q_1,...,Q_4$ need not be aligned. The combinatorics $\AGt$ 
is determined by a particular choice of $P_5$ and $P_6$ for which $Q_1,...,Q_4$ belong in a line, say~$L_{11}$.


The existence of such points $P_5$ and $P_6$ in $\bp^2$ satisfying the condition given in $\AGt$ 
is not unique (as it was proved in~\cite{bgb:14a}). 
In fact, there are four choices leading to the following realizations:
\begin{gather*}
L_9: x + \left(\xi^{2} + \xi\right) y + \overline{\left(\xi^{2} + \xi\right)} z=0,\quad
L_{10}: x - \left(\overline{\xi}^{2} +\xi\right) y + \left(\overline{\xi}+\xi^{2}\right) z=0,\\
L_7: x - \overline{\left(\xi^{2} + \xi\right)} y - \left(\xi^{2} +\xi\right) z=0,\quad
L_8: x + \left(\overline{\xi}+\xi^{2}\right) y - \left(\overline{\xi}^{2} +\xi\right) z=0,
\end{gather*}
where $\xi$ is a primitive fifth-root of unity. 

Finally, one can also obtain equations of the line $L_{11}$ joining $Q_1,\dots,Q_4$:
$$
L_{11}:5 x + \left(1+2\xi+3\xi^2-\overline{\xi}^2\right) y - 
\left(2+4\xi+\xi^2+3\overline{\xi}^2\right) z=0.
$$
For further use, the four realizations parametrized by the primitive fifth-roots of unity
will be denoted by~$(\cA')^\xi$.

\begin{figure}[ht]
\begin{tikzpicture}[scale=2,vertice/.style={draw,circle,fill,minimum size=0.25cm,inner sep=0}]
\def\tenlines{
\node[label={225:$P_1$}] (P1) at (-1,1) {};
\node[label={135:$P_2$}] (P2) at (-1,-1) {};
\node[label={[xshift=3pt]225:$P_3$}] (P3) at (1,-1) {};
\node[label={135:$P_4$}] (P4) at (1,1) {};

\node[label={272:\color{blue}{$P_5$}}] (P5) at ($(P1.center)!.65!(P3.center)$) {};
\node[label={90:\color{blue}{$P_6$}}]  (P6) at ($(P2.center)!.6!(P4.center)$) {};

\draw[name path=L1,line width=1.5pt,color=red] ($(P1.center)!-.8!(P2.center)$)--($(P2.center)!-1!(P1.center)$);
\draw[name path=L2,line width=1.5pt,color=red] ($(P2.center)!-.3!(P3.center)$)--($(P3.center)!-1.3!(P2.center)$);
\draw[name path=L3,line width=1.5pt,color=red] ($(P3.center)!-1.3!(P4.center)$)--($(P4.center)!-.5!(P3.center)$);
\draw[name path=L4,line width=1.5pt,color=red] ($(P4.center)!-1.3!(P1.center)$)--($(P1.center)!-.3!(P4.center)$);

\draw[name path=L5,line width=1.5pt,color=blue] ($(P1.center)!-.3!(P3.center)$)--($(P3.center)!-.4!(P1.center)$);
\draw[name path=L6,line width=1.5pt,color=blue] ($(P2.center)!-.3!(P4.center)$)--($(P4.center)!-.4!(P2.center)$);

\draw[name path=M1,line width=1.5pt,color=brown] ($(P1.center)!-.5!(P6.center)$)--($(P6.center)!-2.5!(P1.center)$);
\draw[name path=M2,line width=1.5pt,color=brown] ($(P2.center)!-.5!(P5.center)$)--($(P5.center)!-2.3!(P2.center)$);
\draw[name path=M3,line width=1.5pt,color=brown] ($(P3.center)!-.7!(P6.center)$)--($(P6.center)!-1.9!(P3.center)$);
\draw[name path=M4,line width=1.5pt,color=brown] ($(P4.center)!-.7!(P5.center)$)--($(P5.center)!-2.1!(P4.center)$);

\path [name intersections={of=L1 and M3,by=Q1}];
\path [name intersections={of=L2 and M4,by=Q2}];
\path [name intersections={of=L3 and M1,by=Q3}];
\path [name intersections={of=L4 and M2,by=Q4}];

\path [name intersections={of=L1 and M4,by=R1}];
\path [name intersections={of=L2 and M1,by=R2}];
\path [name intersections={of=L3 and M2,by=R3}];
\path [name intersections={of=L4 and M3,by=R4}];
}

\def\eleventhline{
\draw[name path= M11,dashed,line width=1.5pt,color=magenta] plot [smooth] coordinates {($(Q1)+(-1/6,3/6)$) (Q1) (Q2) ($(Q2)+(0,-1)$) (Q3) ($(Q3)+(1,0)$)  (Q4) ($(Q4)+(0,.5)$)};
\path [name intersections={of=L5 and M11,by=Q511}];
\path [name intersections={of=L6 and M11,by=Q611}];
}

\def\twelvethline{
\draw[name path=N,line width=1.5pt,color=green] ($(P1.center)!-.3!(Q2.center)$)--($(Q2.center)!-1.2!(P1.center)$);
}

\def\nodos{
\node[vertice,color=red] at (P1) {};
\node[vertice,color=red] at (P2) {};
\node[vertice,color=red] at (P3) {};
\node[vertice,color=red] at (P4) {};
\node[vertice,color=blue] at (P5) {};
\node[vertice,color=blue] at (P6) {};
\node[vertice,color=magenta,label={180:\color{magenta}{$Q_1$}}] at (Q1) {};
\node[vertice,color=magenta,label={183:\color{magenta}{$Q_2$}}] at (Q2) {};
\node[vertice,color=magenta,label={10:\color{magenta}{$Q_3$}}] at (Q3) {};
\node[vertice,color=magenta,label={135:\color{magenta}{$Q_4$}}] at (Q4) {};
\node[vertice,color=brown] at (R1) {};
\node[vertice,color=brown] at (R2) {};
\node[vertice,color=brown] at (R3) {};
\node[vertice,color=brown] at (R4) {};
\node[vertice,color=black] at (Q511) {};
\node[vertice,color=black] at (Q611) {};
}
\def\nombres{
\node[left] at ($.5*(P1.center)+.5*(P2.center)$){$L_1$};
\node[above] at ($2*(P3.center)-1*(P2.center)$){$L_2$};
\node[right] at ($1.5*(P3.center)-.5*(P4.center)$){$L_3$};
\node[above] at ($-.4*(P1.center)+1.4*(P4.center)$){$L_4$};
\node[above right=-3pt] at ($-.4*(P1.center)+1.4*(P3.center)$){$L_5$};
\node[above right=-3pt] at ($-.4*(P2.center)+1.4*(P4.center)$){$L_6$};
\node[above right=-3pt] at ($-1*(P3.center)+2*(P6.center)$){$L_7$};
\node[below right=-5pt] at ($-1.5*(P4.center)+2.5*(P5.center)$){$L_8$};
\node[below] at ($-2*(P1.center)+3*(P6.center)$){$L_9$};
\node[above=2pt] at ($-1*(P2.center)+2*(P5.center)$){$L_{10}$};
\node[above=2pt] at ($1.3*(P3.center)+1.3*(P4.center)$){$L_{11}$};
\node[above=2pt] at ($1.5*(P3.center)+1.1*(P2.center)$){$L_{12}$};
}
\tenlines
\eleventhline
\twelvethline
\nodos
\nombres
\end{tikzpicture}
\caption{The combinatorics~$\AG$.}
\label{fig:12lines}
\end{figure}

The ordered combinatorics~$\AGt$ has a non-trivial automorphism group,
generated by the product of the cyclic permutations of $(L_1,\dots,L_4)$ and $(L_7,\dots,L_{10})$,
and the transposition of $(L_5,L_6)$, while $L_{11}$ is invariant.
Note that the action of this group of automorphisms can be realized by projective transformations
sending cyclically $(\cA')^{\xi^i}$ to~$(\cA')^{\xi^{i+1}}$.

The final combinatorics $\AG$ with trivial automorphism group can be obtained from $\AGt$ 
by adding a new line $L_{12}$ joining $P_1$ and~$Q_2$.
The four realizations of $\AG$ will be denoted by $\cA^\xi$.
The line $L_{12}$ has equations:
\begin{gather*}
L_{12}: x - \left(1+\overline{\xi}\right) y + \overline{\xi} z=0.
\end{gather*}

In terms of \autoref{def-comb}, the combinatorics is defined 
$\mathcal{L}=\{1,\dots,12\}$ and $\mathcal{P}$ below:
\begin{gather*}
\left\{1, 4, 5, 9,12\right\},
\left\{1, 2, 6,10\right\},
\left\{2, 3, 5, 7\right\}, 
\left\{3, 4, 6,8\right\},
\left\{2,8,11, 12\right\},
\left\{1,7,11\right\},
\left\{3, 9, 11\right\},
\left\{4,10, 11\right\},
\\
\left\{5,8,10\right\},  \left\{6, 7,9\right\},  \left\{5, 6\right\}, \left\{5,11\right\}, \left\{6,11\right\},
\left\{1, 3\right\},\left\{2,4\right\}, 
\left\{1,8\right\},  \left\{2,9\right\}, \left\{3,10\right\},\left\{4,7\right\}, \\
\left\{7,8\right\},\left\{7,10\right\}, \left\{8, 9\right\},   \left\{9,10\right\}, 
\left\{3, 12\right\}, \left\{6,12\right\},   \left\{7,12\right\},\left\{10, 12\right\}
\end{gather*}

\section{Homological rigidity}\label{sec-rigid}

Let $\mathscr C=(\mathcal L,\mathcal P)$ be a line combinatorics as presented in 
\S\ref{sec-combinatorics}. Let $\mathbb{Z}^{\mathcal{L}} $ be a free abelian group
with basis $\{x_L\mid L\in\mathcal{L}\}$; the dual basis in $\left(\mathbb{Z}^{\mathcal{L}}\right)^*$
is denoted by $\{y_L\mid L\in\mathcal{L}\}$. We define $H_1=H_1^{\mathscr{C}}$ as the
quotient of $\mathbb{Z}^{\mathcal{L}} $ by the principal submodule generated by
$\sum_{L\in\mathcal{L}} x_L$; in the same way we denote its dual
by $H^1=H^1_{\mathscr{C}}$. Note that $H^1$ is the kernel of the augmentation
morphism $\varepsilon: \left(\mathbb{Z}^{\mathcal{L}}\right)^*\to\mathbb{Z}$
associated to the above basis.
These lattices are naturally isomorphic to the first homology and cohomology groups of the complement 
in $\PP^2$ of any realization of $\mathscr C$. 

To avoid notation overload the class of $x_L$ in $H_1$ is still denoted by $x_L$.
For each $P\in\mathcal{P}$, let us denote $x_P:=\sum_{L<P} x_L$.
Let $ H_2:=H_2^{\mathscr{C}}$
the subgroup of $H_1\wedge H_1$ generated by 
\begin{equation}
\label{eq-H2}
 \left\{x_{L,P}:=x_{L}\wedge x_P\mid L< P\in \mathcal{P}\right\}.
\end{equation}
Its dual $ H^2:= H^2_{\mathscr C}$ is the quotient of $H^1\wedge H^1$ 
by the subgroup generated by
\begin{equation*}
 \left\{y_{L}\wedge y_{L'}+ y_{L'}\wedge y_{L''}+ y_{L''}\wedge y_{L} \mid L,L',L''< P\in \mathcal{P}\right\}
\end{equation*}
as part of the Orlik-Solomon algebra. As above, these groups
are naturally isomorphic 
to the second homology and cohomology groups of the complement in $\PP^2$ of any realization of $\mathscr C$. 
Besides the automorphisms of the combinatorics, there are some geometrical automorphisms 
that will be considered here and play an essential role in this theory.

\begin{dfn}
With the previous notation, any automorphism of $H^{\mathscr{C}}_1$ inducing a morphism of 
$H_2^{\mathscr{C}}$ will be referred to as an \emph{admissible} automorphism of the combinatorics. 
The group of admissible automorphisms will be denoted by $\Adm_{\mathscr C}$.
\end{dfn}

\begin{obs}\label{obs-admis}
Admissible automorphisms are also closely related to isomorphisms of fundamental groups
of realizations of combinatorics. It is a consequence of \autoref{obs-combi} that
if $\mathcal{A}_1,\mathcal{A}_2$ are two realizations of~$\mathscr{C}$ and
$\varphi:\pi_1(\mathbb{P}^2\setminus\mathcal{A}_1)\to\pi_1(\mathbb{P}^2\setminus\mathcal{A}_2)$
is an isomorphism, under the above identifications, then its induced automorphism 
$\varphi_*:H^{\mathscr{C}}_1\to H^{\mathscr{C}}_1$ is admissible.
\end{obs}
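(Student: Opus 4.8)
The plan is to exploit the fact that the cup-product structure of the cohomology ring of the complement is an invariant of its fundamental group, so that any isomorphism of fundamental groups must be compatible with the combinatorial data encoded in $H_2^{\mathscr C}$ and its dual $H^2_{\mathscr C}$. Write $M_i=\mathbb{P}^2\setminus\mathcal{A}_i$ and $G_i=\pi_1(M_i)$, and let $\varphi\colon G_1\to G_2$ be an isomorphism. First I would record the easy half: passing to abelianizations, $\varphi$ induces an isomorphism on $H_1(M_1)\to H_1(M_2)$, which under the identifications $H_1(M_i)\cong H_1^{\mathscr C}$ is the automorphism $\varphi_*$ of $H_1^{\mathscr C}$; dually, $\varphi$ induces $\varphi^*\colon H^1(M_2)\to H^1(M_1)$, equal to the transpose of $\varphi_*$ under $H^1(M_i)\cong H^1_{\mathscr C}$. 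What remains is precisely the \emph{admissibility}, namely that the induced map $\varphi_*\wedge\varphi_*$ on $H_1^{\mathscr C}\wedge H_1^{\mathscr C}$ carries $H_2^{\mathscr C}$ into itself.

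The key step is to show that the cup product $\mu\colon H^1(M)\wedge H^1(M)\to H^2(M)$ depends only on $\pi_1(M)$. Here I would use the classifying map $c\colon M\to K(\pi_1(M),1)$: it is an isomorphism on $H^1$, and, by Hopf's theorem (equivalently, the five-term exact sequence of low-degree terms, using that the universal cover is simply connected), it induces an \emph{injection} $c^*\colon H^2(\pi_1(M))\hookrightarrow H^2(M)$. Since $H^1(M)=c^*H^1(\pi_1(M))$ and $c^*$ is a ring homomorphism, every cup product of degree-one classes lies in the image of $c^*$; hence $\mu$ factors as $H^1\wedge H^1\xrightarrow{\,\mu_G\,}H^2(\pi_1(M))\xrightarrow{\,c^*\,}H^2(M)$, and the injectivity of $c^*$ yields $\ker\mu=\ker\mu_G$. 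Thus $\ker\mu$ is intrinsic to $\pi_1(M)$, and by the Orlik--Solomon description of $H^*(M)$ it coincides with the subgroup $R$ of relations by which $H^2_{\mathscr C}$ is the quotient of $H^1\wedge H^1$.

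Finally I would combine these facts. Because $\varphi^*$ respects cup products, one has $\mu_1\circ(\varphi^*\wedge\varphi^*)=\varphi^*\circ\mu_2$, so $\varphi^*\wedge\varphi^*$ carries $R_2=\ker\mu_2$ isomorphically onto $R_1=\ker\mu_1$; under the identification of both with the combinatorial relation module of $\mathscr C$, this says that $\varphi^*\wedge\varphi^*$ preserves $R$. Dualizing, and using the annihilator relation $H_2^{\mathscr C}=R^{\perp}$ (which is exactly the assertion that $H_2^{\mathscr C}$ and $H^2_{\mathscr C}$ are dual lattices as set up above), I conclude that $\varphi_*\wedge\varphi_*$ preserves $H_2^{\mathscr C}$, i.e.\ $\varphi_*\in\Adm_{\mathscr C}$.

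The main obstacle is the cohomological comparison between the group and the space carried out in the second paragraph: one must justify the injectivity of $c^*$ in degree two and identify $\ker\mu$ with the combinatorial Orlik--Solomon relations, while keeping careful track of the integral duality $H_2^{\mathscr C}=R^{\perp}$ so that the annihilator is taken inside the correct lattice. Once the cup product is seen to be a group invariant, the rest is formal duality.
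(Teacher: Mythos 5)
Your proof is correct, but it takes a genuinely different route from the paper's. The paper's argument is essentially a one-line consequence of its Alexander-invariant machinery: the first truncated Alexander invariant $M_1(G)\cong G'/\tilde{\gamma}_3(G)$ is functorial under group isomorphisms, the class in $M_1$ of a commutator $[a,b]$ depends only on the homology classes of $a$ and $b$ (the $k=1$ case of \eqref{eq-key}), and by \autoref{obs-combi} $M_1$ is combinatorially identified with $\left(H_1^{\mathscr{C}}\wedge H_1^{\mathscr{C}}\right)/H_2^{\mathscr{C}}$ (compare \eqref{eq-rels-comb} with \eqref{eq-H2}); hence the isomorphism induced by $\varphi$ on $M_1$ is the descent of $\varphi_*\wedge\varphi_*$ to this quotient, which forces $\varphi_*\wedge\varphi_*$ to preserve $H_2^{\mathscr{C}}$, i.e.\ admissibility. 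You argue instead on the dual, cohomological side: the cup product $\mu\colon H^1\wedge H^1\to H^2$ of the complement factors through group cohomology via the classifying map, with $c^*$ injective in degree two by Hopf's theorem (five-term sequence), so $\ker\mu$ is a $\pi_1$-invariant; Orlik--Solomon identifies $\ker\mu$ with the relation lattice $R$, and transposing yields preservation of $H_2^{\mathscr{C}}=R^{\perp}$. The two arguments are formally dual: the exact sequence $0\to H_2^{\mathscr{C}}\to H_1^{\mathscr{C}}\wedge H_1^{\mathscr{C}}\to M_1\to 0$ implicit in \autoref{obs-combi} is (up to the integral caveat below) the $\mathbb{Z}$-dual of your $0\to R\to H^1\wedge H^1\to H^2\to 0$. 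What each buys: your version isolates the classical fact that the degree-two truncation of the cohomology ring is a fundamental-group invariant, needing no Alexander-invariant apparatus; the paper's version is a free by-product of the objects ($M_k$, formula \eqref{eq-key}) it must construct anyway for the AI-isomorphism tests, and it stays on the homological side where all the later computations live. One step of yours deserves the care you give it: the conclusion needs $H_2^{\mathscr{C}}=R^{\perp}$ \emph{integrally}, i.e.\ that the pairing between $H_2^{\mathscr{C}}$ and $H^2_{\mathscr{C}}=\left(H^1\wedge H^1\right)/R$ is perfect over $\mathbb{Z}$ (equivalently, $H^2_{\mathscr{C}}$ is torsion-free). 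This is covered by the paper's standing identification of $H_2^{\mathscr{C}}$ and $H^2_{\mathscr{C}}$ with the integral (co)homology of the complement, which for arrangement complements is torsion-free; so there is no gap, but the caveat is real and worth keeping explicit, since only containment $H_2^{\mathscr{C}}\subseteq R^{\perp}$ is formal.
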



\begin{dfn}[\cite{mmarco-resonance}]
A combinatorics $\mathscr C$ is called \emph{homologically rigid} if 
$\Adm_{\mathscr C}=\pm 1_{H^{\mathscr C}_1}\times\Aut_{\mathscr C}$.
\end{dfn}

Homological triviality is fundamental for the study of certain fine invariants of the topology 
of different realizations of~$\mathscr C$. Checking the homological triviality of a combinatorics
is a subtle combinatorial property.

The main tools for the study of admissible automorphisms and homological rigidity
are the strata of a certain 
stratification of $H_{\mathscr C}^1$ called resonance varieties, which will be described next.

\subsection{The resonance varieties of a combinatorics}
\mbox{}

For any $\omega\in H^1=H^1_{\mathscr C}$, one can define a complex $(H^\bullet,\wedge \omega)$ given by 
the wedge product as follows:
$$
\array{ccccccc}
H^0=\ZZ & \longrightmap{\wedge \omega} & H^1 & \longrightmap{\wedge \omega} & H^2 & \to & 0\\
&& \sigma & \mapsto & \sigma \wedge \omega. &&
\endarray
$$
We are interested in the first cohomology of this complex $H^1(H^\bullet,\wedge \omega)$.

\begin{dfn}
The \emph{$k$-th resonance variety} of $\mathscr C$ is defined as
$$\mathcal R_{k,\mathscr C}:=\{\omega\in H^1 \mid 
\rank H^1(H_{\mathscr C}^\bullet,\wedge \omega)\geq k\}\subset H^1.$$
Analogously, one can define $\mathcal R_{k,\mathscr C,\KK}$ over a field $\KK$ when considering the 
complex over~$\KK$ and stratifying the vector space $H^1\otimes \KK$ by the dimension of the 
associated cohomology.
\end{dfn}

As expected, the resonance varieties $\mathcal R_{k,\mathscr C,\KK}$ of a combinatorics 
coincide with the resonance varieties over $\KK$ of any realization of~$\mathscr C$.

Each $k$-th resonance component over $\ZZ$ is determined by a subcombinatorics with a certain structure 
called \emph{combinatorial pencils} (cf.~\cite{mmarco-resonance}) or multinets 
(cf.~\cite{Falk-Yuzvinsky-multinets}) with $k+2$ fibers. The structure of such combinatorial pencils with
$k+2$ fibers mimics the combinatorial properties of a line arrangement which is the union of $k+2$ fibers
of a pencils of curves. Such subarrangements will be referred to here using the typical pencil they describe. 
For instance \emph{multiple point}, \emph{Ceva-type} (the combinatorics of the six lines in a generic pencil 
of smooth conics) or \emph{Hesse type} (for the combinatorics of the 12 lines in an arrangement of smooth 
cubics based at their 9 inflection points). 

Let us denote by $S$ a certain 
subcombinatorics of $\mathscr{C}$ (obtained as a subset of lines and their incidence relations)
forming a combinatorial pencil. There is exactly one irreducible component in
the resonance varieties of $S$ which is not contained in the resonance varieties of a subcombinatorics. 
Such component will be denoted by 
$H_S$ and we refer to as the \emph{resonance component associated to} $S$. Note that its dimension is $k+1$, if $k+2$ is the number of fibers of the 
combinatorial pencil.

\begin{dfn}
Three subcombinatorics $S_1, S_2, S_3$ of combinatorial pencils in a line combinatorics are said to form a 
\emph{triangle} if 
$$\codim \bigcap_i H_{S_i}=\sum_i\codim H_{S_i}-1.$$
\end{dfn}

The following result is obtained in~\cite[Lemma 6]{mmarco-resonance}.

\begin{lema}
\label{lem-triangles}
Let $\rho\in \Adm_{\mathscr C}$ be an admissible automorphism of $H_1$. Then its dual $\rho^* $ preserves triangles.
\end{lema}

As depicted in Lemma~\ref{lem-triangles}, the resonance components (their dimension and the number of 
triangles they belong to) can impose conditions on admissible automorphisms.
The following is an immediate consequence of~\cite{Yuzvinsky-new-bound}.

\begin{cor}
\label{cor-multiple-point}
Let $\mathscr C$ be a realizable combinatorics and $R$ a $k$-th resonance component with $k>2$. 
Then $R$ is associated with the combinatorial pencil 
of a multiple point of multiplicity~$m=k+2>4$.
\end{cor}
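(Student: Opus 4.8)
The plan is to combine the structural description of resonance components recalled just above the statement with Yuzvinsky's bound on the number of special fibers of a pencil. By that description, the $k$-th resonance component $R$ over $\mathbb{Z}$ is of the form $H_S$ for a subcombinatorics $S$ carrying a combinatorial pencil (multinet), the number of fibers of $S$ equals $k+2$, and $\dim H_S = k+1$. Thus the corollary is equivalent to the assertion that, on a realizable combinatorics, the only combinatorial pencils with more than $4$ fibers are the trivial ones attached to a multiple point.

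First I would separate combinatorial pencils into two families. A pencil $S$ is \emph{local} (or degenerate) when all of its lines share a common point $P$: in that case $S$ is exactly the multiple point of multiplicity $m$, its fibers are the individual lines through $P$, and the number of fibers equals $m$. A pencil is \emph{essential} otherwise. For a local pencil one gets directly $m = k+2$, so once we know that $R$ must be local the multiplicity is forced to be $m = k+2 > 4$ and we are done. The entire content of the statement is therefore that an essential pencil cannot give rise to a $k$-th resonance component when $k > 2$.

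To rule out the essential case I would invoke \cite{Yuzvinsky-new-bound}: for a realizable line combinatorics, every essential combinatorial pencil has at most $4$ fibers (indeed $3$ or $4$). Since the number of fibers equals $k+2$, an essential pencil can only produce resonance components with $k+2 \le 4$, that is $k \le 2$. Hence under the hypothesis $k > 2$ the subcombinatorics $S$ defining $R$ cannot be essential, so it is local, i.e.\ a multiple point, necessarily of multiplicity $m = k+2 > 4$.

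The one delicate point, and the reason realizability is indispensable, is precisely the bound of \cite{Yuzvinsky-new-bound}: it is obtained by regarding the multinet as a pencil of completely reducible plane curves and exploiting the algebraic geometry of such pencils, so it genuinely constrains \emph{realizable} pencils and is not a formal consequence of the combinatorial axioms alone. Once that bound is granted in its ``number of fibers'' formulation, the local/essential dichotomy reduces the argument to a single line, which is why the result is stated as an immediate consequence.
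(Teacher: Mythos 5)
Your proof is correct and is essentially the paper's own argument: the paper offers no written proof, stating the corollary as an ``immediate consequence'' of~\cite{Yuzvinsky-new-bound}, and your local/essential dichotomy together with Yuzvinsky's bound of at most four fibers for an essential realizable pencil is precisely the intended one-line deduction. Your identification of the fiber count with $k+2$ and of the local case with a multiple point of multiplicity $m=k+2>4$ matches the setup given just before the statement.
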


This has the following consequence on an admissible automorphism $\rho$. Let $P_m$ be a point on $\mathcal{P}$ 
of multiplicity $m$, which is a subcombinatorics of $\mathscr C$ and denote by $H_{P_m}\subset H^1$ its 
associated resonance component of dimension $k$. Then any admissible automorphism $\rho:H_1\to H_1$ must be 
such that $\rho^* (H_{P_m})=H_{Q_m}$ for another $Q_m\in \mathcal{P}$ of multiplicity~$m$.

This results in certain combinatorial conditions on the images of the special basis in~$H_1$. 
If enough of these conditions concur, one can eventually be able to state homological rigidity.
The idea behind this concept comes from the work of Rybnikov (cf.~\cite{ry:11-98,accm:03a}).

\subsection{The homological rigidity of \texorpdfstring{$\AG$}{G91}}
\mbox{}

In our example, it is possible to compute that the combinatorial pencils contained in the combinatorics 
$\AG$ described in \S\ref{sec-combinatorics}: the 10 multiple points and 15 Ceva-type subarrangements. 
Only one of the 25 combinatorial pencils has 5 fibers (the quintuple point).


The number of triangles $\Delta_S$ in a given subcombinatorics $S$ as well as the number of triangles 
$\Delta_{P_1,S}$ containing the quintuple point $P_1$ are recorded in Table~\ref{tab-combinatorics}.

\begin{table}
\centering
\begin{tabular}{|c|c|c|c|c|}
\hline 
$i$& subcombinatorics $S_i$ & $\dim H_S$ & $\Delta_S$ & $\Delta_{S,P_1}$ \\
\hline 
1 & 1, 7, 11 & 2 & 18 & 7 \\
2 & 3, 9, 11 & 2 & 22 & 8 \\
3 & 4, 10, 11 & 2 & 21 & 7 \\
4 & 5, 8, 10 & 2 & 24 & 7 \\
5 & 6, 9, 7 & 2 & 16 & 6 \\

6 & 1, 2, 6, 10 & 3 & 53 & 12 \\
7 & 2, 3, 5, 7 & 3 & 49 & 13 \\
8 & 2, 8, 11, 12 & 3 & 57 & 15 \\
9 & 4, 3, 6, 8 & 3 & 50 & 12 \\

10 & 1, 4, 5, 9, 12 & 4 & 91 & 91 \\

11 & 1, 2, 3, 4, 5, 6 & 2 & 24 & 8 \\
12 & 1, 2, 4, 6, 8, 12 & 2 & 24 & 8 \\
13 & 1, 2, 4, 10, 11, 12 & 2 &20 & 7 \\
14 & 1, 2, 5, 6, 7, 9 & 2 & 14 & 7 \\
15 & 1, 2, 5, 7, 11, 12 & 2 & 14 & 7 \\
16 & 1, 2, 5, 8, 10, 12 & 2 & 20 & 8 \\
17 & 1, 3, 5, 7, 9, 11 & 2 & 14 & 7 \\
18 & 1, 4, 5, 6, 8, 10 & 2 & 19 & 6 \\
19 & 2, 3, 4, 5, 8, 12 & 2 & 20 & 8 \\
20 & 2, 3, 5, 6, 8, 10 & 2 & 14 & 0 \\
21 & 2, 3, 5, 9, 11, 12 & 2 & 18 & 9 \\
22 & 2, 4, 6, 8, 10, 11 & 2 & 15 & 0 \\
23 & 3, 4, 5, 6, 7, 9 & 2 & 12 & 6 \\
24 & 3, 4, 8, 9, 11, 12 & 2 & 13 & 7 \\
25 & 4, 5, 8, 10, 11, 12 & 2 & 15 & 7 \\
\hline
\end{tabular} 
\vspace*{14pt}
\caption{Combinatorics of triangles in $\AG$}
\label{tab-combinatorics}
\end{table}

Note that the subcombinatorics $S_i$, $i=1,...,5$ correspond to triple points, 
whereas $S_i$, $i=6,...,9$ correspond to quadruple points, and $S_{10}$ is the combinatorics of the quintuple
point. Finally, the remaining $S_i$, $i=11,...,25$ correspond to the combinatorics of Ceva-type arrangements.

As a consequence of Table~\ref{tab-combinatorics} one has the following.

\begin{prop}\label{prop-hom-rig}
The realizable combinatorics $\AG$ from{\rm~\S\ref{sec-combinatorics}} is homologically rigid.
Moreover, the only admissible automorphisms of $H^1_{\AG}$ are $\pm 1_{H^1_{\AG}}$.
\end{prop}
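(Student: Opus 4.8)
The plan is to combine the structural restrictions coming from Lemma~\ref{lem-triangles} with the quantitative data recorded in Table~\ref{tab-combinatorics}. By definition, proving homological rigidity means showing that every admissible automorphism $\rho$ of $H_1^{\AG}$ satisfies $\rho\in\pm 1\times\Aut_{\AG}$; since the final combinatorics $\AG$ has trivial automorphism group (this is exactly why $L_{12}$ was added, as explained in~\S\ref{sec-combinatorics}), the factor $\Aut_{\AG}$ collapses and the claim reduces to the stronger statement that the only admissible automorphisms are $\pm 1_{H^1_{\AG}}$. So it suffices to prove this last assertion, and homological rigidity follows immediately.

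First I would fix an admissible automorphism $\rho$ and pass to its dual $\rho^*$ acting on $H^1$. By Lemma~\ref{lem-triangles}, $\rho^*$ preserves triangles, and by the discussion following Corollary~\ref{cor-multiple-point}, $\rho^*$ must send the resonance component $H_{P_m}$ of a multiple point of multiplicity $m$ to the resonance component $H_{Q_m}$ of another multiple point of the same multiplicity. The key structural observation is that $S_{10}=\{1,4,5,9,12\}$ is the \emph{unique} quintuple point (it is the only $5$-fiber combinatorial pencil, as noted in~\S\ref{sec-combinatorics}). Hence $\rho^*$ must fix $H_{S_{10}}$. Next I would exploit the numerical invariants in Table~\ref{tab-combinatorics}: the total triangle count $\Delta_S$ and, crucially, the refined count $\Delta_{S,P_1}$ of triangles through the quintuple point are preserved by $\rho^*$ (since $\rho^*$ fixes $H_{S_{10}}$ and permutes the other resonance components while preserving triangle incidences). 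Inspecting the columns, one sees that these two invariants together distinguish each resonance component $H_{S_i}$ from every other component of the same dimension; that is, the pair $(\Delta_{S_i},\Delta_{S_i,P_1})$ is a complete separating invariant on each dimension stratum. Consequently $\rho^*$ must fix every $H_{S_i}$ individually.

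Once $\rho^*$ is known to fix each resonance component, I would extract linear constraints on $\rho^*$ from the defining equations of the components $H_{S_i}\subset H^1$. Each combinatorial pencil $S_i$ determines a concrete subspace of $H^1$, and requiring $\rho^*$ to preserve all of them simultaneously pins down $\rho^*$ up to a global scalar on $H^1\otimes\QQ$. Since $\rho^*$ is an automorphism of the lattice $H^1$ fixing each component, and since $H^1$ is generated by vectors lying in (unions of) these components, the only admissible scalars are $\pm 1$, yielding $\rho^*=\pm 1_{H^1}$ and hence $\rho=\pm 1_{H_1}$.

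The main obstacle I expect is the penultimate step: verifying that fixing every resonance component really forces $\rho^*$ to be scalar. Preserving the components as \emph{sets} is weaker than fixing each defining vector, so one must check that the spans and mutual intersections of the $H_{S_i}$ are rich enough to rigidify the action — in effect that the incidence geometry of the $12$ lines leaves no extra linear freedom. This is precisely the kind of heavy linear-algebra-over-$\ZZ$ verification that the \texttt{Sagemath} code in Appendix~\ref{sec-code} is designed to carry out, so I would delegate the bookkeeping of the triangle counts and the final rank computation to that computation rather than performing it by hand.
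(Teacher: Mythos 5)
Your first half follows the paper's strategy, but it contains a factual error about Table~\ref{tab-combinatorics}. The pair $(\Delta_S,\Delta_{S,P_1})$ is \emph{not} a complete separating invariant on the dimension-$2$ stratum: it equals $(24,8)$ for both $S_{11}$ and $S_{12}$, equals $(14,7)$ for $S_{14}$, $S_{15}$ and $S_{17}$, and equals $(20,8)$ for $S_{16}$ and $S_{19}$, so you cannot conclude that $\rho^*$ fixes \emph{every} component $H_{S_i}$; the Ceva-type components may a priori be permuted. What the table does show --- and all the paper uses --- is weaker but sufficient: the quintuple point is the unique component of its dimension, the four quadruple points have pairwise distinct pairs $(53,12)$, $(49,13)$, $(57,15)$, $(50,12)$, and each triple-point pair differs from every other dimension-$2$ pair, so exactly the components associated to \emph{multiple points} are individually $\rho^*$-invariant.

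The decisive gap is your last step. You correctly identify that preserving components as sets is weaker than fixing vectors, but you then delegate the verification to the code of Appendix~\ref{sec-code} --- and that code does something else entirely: it computes group presentations from the wiring diagrams and sets up the truncated Alexander invariant linear system for the AI-isomorphism test of \S\ref{sec-fg}; it performs no computation about resonance components, and no machine computation is invoked in the paper's proof of this proposition. The paper closes the argument by an explicit hand computation using only the multiple-point components: lift $\rho$ to $\tilde\rho:\mathbb{Z}^{\mathcal{L}}\to\mathbb{Z}^{\mathcal{L}}$ with matrix $A$, whose columns are well defined up to adding multiples of $v=(1,\dots,1)$ and for which ${}^t A$ is a lift of $\rho^*$. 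For a multiple point $(i_1,\dots,i_r)$, invariance of its component, spanned by $y_{i_2}-y_{i_1},\dots,y_{i_r}-y_{i_1}$, means $\rho^*(y_{i_k}-y_{i_1})$ lies in the span of $y_{i_1},\dots,y_{i_r}$; equivalently, in the submatrix of $A$ on rows $i_1,\dots,i_r$ and columns outside $\{i_1,\dots,i_r\}$ all rows are equal. Imposing this for all multiple points of $\AG$ forces every column of $A$ to be constant off the diagonal, so after column operations by $v$ one may assume $A$ diagonal; then $\rho(x_L)=\varepsilon_L x_L$ with $\varepsilon_L\in\{-1,1\}$ because $\rho$ is an automorphism, and $\rho\bigl(\sum_L x_L\bigr)=0$ forces all $\varepsilon_L$ to coincide, giving $\rho=\pm 1_{H_1}$. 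Without this (or an equivalent) finite linear-algebra argument, your proof is incomplete at precisely the point you flagged as the main obstacle.
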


\begin{proof}
By Lemma~\ref{lem-triangles} and Corollary~\ref{cor-multiple-point}, triangles containing the 
quintuple point are also preserved. Since the last two columns of subcombinatorics of dimension 3 (the 
quadruple points) are all different, such resonance components are invariant by $\rho$. Analogously, resonance
components of triple points are also $\rho$-invariant. We use this information to prove
the property.

Let $\rho:H_1\to H_1$ be an admissible automorphism. Let us choose any lift~$\tilde{\rho}$ fitting in the following
commutative diagram
\ifcd
\begin{equation*}
\begin{tikzcd}[column sep=1em]
\mathbb{Z}^{\mathcal{L}}\arrow[rr,"\tilde{\rho}"]\arrow[d]&&\mathbb{Z}^{\mathcal{L}}\arrow[d]\\
H_1^{\mathscr{C}}\arrow[rr,"\rho"]&&H_1^{\mathscr{C}}.
\end{tikzcd} 
\end{equation*}
\else
\begin{equation*}
\begin{tikzpicture}[description/.style={fill=white,inner sep=2pt},baseline=(current bounding box.center)]
\matrix (m) [matrix of math nodes, row sep=2.5em,
column sep=1em, text height=1.5ex, text depth=0.25ex]
{\mathbb{Z}^{\mathcal{L}}&&\mathbb{Z}^{\mathcal{L}}\\
H_1^{\mathscr{C}}&&H_1^{\mathscr{C}}\\};
\path[->,>=angle 90](m-1-1) edge node[above] {$\tilde{\rho}$} (m-1-3);
\path[->,>=angle 90](m-1-1) edge (m-2-1);
\path[->,>=angle 90](m-2-1) edge node[above] {$\rho$} (m-2-3);
\path[->,>=angle 90](m-1-3) edge (m-2-3);
\end{tikzpicture}
\end{equation*}
\fi
Let $A$ be a matrix of $\tilde{\rho}$ in the natural basis. Note that the columns of~$A$ are only well defined
up to addition of a multiple of~$v:=(1,\dots,1)$ and that ${}^t A$ is the matrix of a lift of $\rho^*$. 
Let $(i_1,\dots,i_r)$ be a multiple point. Since $\rho^*$ fixes the resonance component associated to this point, 
with basis $y_{i_2}-y_{i_1},\dots,y_{i_r}-y_{i_1}$, we deduce the following fact. Consider the submatrix of $A$ 
given by the rows $i_1,\dots,i_r$ and the columns distinct from $i_1,\dots,i_r$; then, all the rows of this matrix 
are equal. This comes from the fact that $\rho^*(y_{i_2}-y_{i_1})$ is in the subspace spanned by $y_{i_1},\dots,y_{i_r}$.

It is not hard to prove that after imposing the above condition, the matrix $A$ has the following property: each 
column has constant entries outside the diagonal. Since the matrix $A$ can be transformed by adding multiples of~$v$ 
to the columns, we can assume that $A$ is diagonal.

Since $\rho$ is a group automorphism, one has $\rho(x_L)=\varepsilon_L x_L$, where $\varepsilon_L\in \{-1,1\}$. 
Moreover, the condition $\rho(\sum_L x_L)=0$ implies that all $\varepsilon_L$ are equal and the result follows.
\end{proof}

\section{The truncated Alexander Invariants}
\label{sec-alex}

In this section the truncated Alexander invariants introduced in~\cite{accm:03a} will be recalled.
Let $G$ be a group, and let $H=H(G):=G/G'$ be its abelianization and consider $M=M(G):=G'/G''$ as an abelian group. 
The conjugation action of $G$ on $G'$, namely, $g\cdot h\mapsto ghg^{-1}$ induces an action of $H$ on $M$. 
This action extends by linearity to $\Lambda:=\mathbb{Z}[H]$, the group algebra of $H$.
The \emph{Alexander invariant} of $G$ is the abelian group $M$ together with the $\Lambda$-module structure.

Note that $\Lambda$ is, in general, not a PID and hence it is not easy to give complete invariants for the 
Alexander invariant. One standard way to approximate the structure of $M$ is by considering its truncation
with respect to a special ideal. A standard way to do this is by means of $\mathfrak{m}\subset\Lambda$ the 
augmentation ideal of $\Lambda$, i.e. the kernel of the map $\Lambda\to\mathbb{Z}$ defined by $h\mapsto 1$, 
$\forall h\in H$ (despite the notation, note that $\mathfrak{m}$ is not a maximal ideal). 

\begin{dfn}
\label{dfn-truncated}
The \emph{truncated Alexander invariant} of $G$ of order $k$ associated with $M=M(G)$ and the augmentation 
ideal $\mathfrak{m}\subset \Lambda$ is the quotient 
$M_k:=M/\mathfrak{m}^k M=M\otimes_{\Lambda}\Lambda/\mathfrak{m}^k$, $k\geq 1$, with its 
$\Lambda/\mathfrak{m}^k$-module structure.
\end{dfn}

These invariants are related to the Chen groups $\tilde{\gamma}_k(G)$ of $G$, that is, the lower central 
series of $G/G''$, the maximal metabelian quotient of $G$. They may alternatively be defined as 
$\tilde{\gamma}_{k+2}(G):=\ker(\varphi_k:G'\to M_k)$ where $\varphi_k$ is the quotient map.

\begin{ntc}
Given two elements $p,q\in G'$, the notation $p\overset{k}{\equiv} q$ will be used meaning equality 
as elements in~$M_k$, that is, $\varphi_k(p)=\varphi_k(q)\in M_k$.
\end{ntc}

\begin{ejm}
\label{ejem-truncated}
Let $\mathcal{A}$ be a line arrangement in $\mathbb{P}^2$, and let
$G_\mathcal{A}:=\pi_1(\mathbb{P}^2\setminus\mathcal{A})$.
We will refer to $M(\mathcal{A}):=M(G_\mathcal{A})$ (resp. $M_k(\mathcal{A}):=M_k(G_\mathcal{A})$) as
the Alexander invariant (resp. truncated Alexander invariants) of~$\mathcal{A}$.

If~$\mathcal{A}=\{L_0,L_1,\dots,L_\ell\}$, then $G_\mathcal{A}$ admits a finite presentation 
$$
G_\mathcal{A}=\langle x_1,\dots,x_\ell\mid R_1,\dots,R_s\rangle
$$
where $x_i$ is a meridian of the line~$L_i$ and the words $R_j$ are commutators. Hence $H$
is a free abelian group of rank~$\ell$, generated by the classes $t_1,\dots,t_\ell$ of meridians of
the lines $x_1,\dots,x_\ell$. As a consequence, $\Lambda=\mathbb{Z}[t_1^{\pm1},\dots,t_\ell^{\pm1}]$
is a ring of Laurent polynomials. The augmentation ideal $\mathfrak{m}$ is generated by the polynomials
$\{t_i-1\}_{i=1}^{\ell}$.

The Alexander invariant of $\mathcal{A}$ is generated as a $\Lambda$-module by $x_{i,j}$, $1\leq i<j\leq\ell$, 
the class of $[x_i,x_j]=x_i x_j x_i^{-1} x_j^{-1}$ in $M$. Note that 
$$
[x_i,x_j x_k]=x_{i,j}+t_j x_{i,k} \mod G''.
$$
This way, each relation $R_k$ induces a linear combination~$\tilde{R}_{k}$ of $\{x_{i j}\}$ with coefficients
in $\Lambda$. As was shown in~\cite[Proposition 2.8]{accm:03a}, the module $M(\mathcal{A})$ is the quotient of 
the free module generated by $x_{i,j}$, $1\leq i<j\leq\ell$, by the submodule generated by 
$\tilde{R}_1,\dots,\tilde{R}_{s}$, and the so-called Jacobi relations
\begin{equation}
\label{eq-jacobi}
(t_i-1) x_{j,k}- (t_j-1) x_{i, k}+ (t_k-1) x_{i,j},\quad
1\leq i<j<k\leq\ell.
\end{equation}
\end{ejm}

\begin{obs}
Each multiple point $P$ produces $m(P)-1$ relations. These relations read in~$M_1$ as
follows. If $P$ is the intersection point of $L_{i_1},\dots,L_{i_m}$, then
it produces
\begin{equation}
\forall k\in\{2,\dots,m\}\quad
\sum_{j=1}^m x_{i_k,i_j}\overset{1}{\equiv} 0. 
\end{equation} 
\end{obs}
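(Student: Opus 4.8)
The plan is to trace a single defining relation of $G_\mathcal A$ coming from the point $P$ through the two reductions implicit in Example~\ref{ejem-truncated}: first its image $\tilde R$ in the Alexander invariant $M$, and then its further image in $M_1=M/\mathfrak m M$. The asserted identity will come out as the explicit form of that image once the coefficients in $\Lambda$ are collapsed by the augmentation.

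First I would recall, from the local topology of an ordinary multiple point, which relations $P$ contributes. If $P$ lies on $L_{i_1},\dots,L_{i_m}$, the meridian $w:=x_{i_1}x_{i_2}\cdots x_{i_m}$ of a small loop encircling $P$ is central in the local fundamental group, so each generator commutes with it. This gives the relations $R_k:[x_{i_k},w]=1$. Since $w$ is itself the product of the $x_{i_j}$, commutation of $w$ with any $m-1$ of the generators forces commutation with the remaining one; hence exactly $m-1$ of these relations are independent, and one keeps those indexed by $k\in\{2,\dots,m\}$. These are precisely the commutator relations $R_k$ of the presentation in Example~\ref{ejem-truncated}.

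Next I would linearize. Iterating the identity $[x_i,x_jx_k]\equiv x_{i,j}+t_j\,x_{i,k}\pmod{G''}$ recorded in Example~\ref{ejem-truncated} over the factors of $w$ gives
$$
\tilde R_k=[x_{i_k},w]\equiv\sum_{l=1}^{m} t_{i_1}\cdots t_{i_{l-1}}\,x_{i_k,i_l}\pmod{G''},
$$
with empty product $1$ for $l=1$. As $R_k$ is a relation of $G_\mathcal A$, this element $\tilde R_k$ is zero in $M$, hence also in $M_1$. Finally, in $M_1=M/\mathfrak m M$ the scalars of $\Lambda=\mathbb Z[H]$ act through the augmentation $\Lambda/\mathfrak m\cong\mathbb Z$, so every monomial $t_{i_1}\cdots t_{i_{l-1}}$ becomes $1$ and the displayed relation collapses to $\sum_{j=1}^m x_{i_k,i_j}\overset{1}{\equiv}0$ (the term $j=k$ contributing $x_{i_k,i_k}=0$), which is the claim. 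The one step that is not purely formal is the first: identifying the precise local relations and the count $m-1$ from the topology of a multiple point. The linearization and the reduction modulo $\mathfrak m$ are then routine bookkeeping, using only the additive expansion of iterated commutators and the identity $x_{j,i}=-x_{i,j}$ in $M$.
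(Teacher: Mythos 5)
Your argument is correct and is precisely the reasoning the paper leaves implicit (the observation is stated without proof): the relations of a multiple point are $[x_{i_k},x_{i_1}\cdots x_{i_m}]=1$ for $k=2,\dots,m$, they linearize via the iterated identity $[x_i,x_jx_k]\equiv x_{i,j}+t_j\,x_{i,k}$ from Example~\ref{ejem-truncated}, and the augmentation $t_i\mapsto 1$ collapses the $\Lambda$-coefficients to give $\sum_{j=1}^m x_{i_k,i_j}\overset{1}{\equiv}0$, with the redundancy of the $k=1$ relation following exactly as you say. The only point worth flagging is that in the actual Zariski--van Kampen presentation \eqref{eq-rel2} the letters are conjugated meridians $\mu^{\beta_Q}$ rather than the generators themselves; this is harmless here because, by \eqref{eq-key}, replacing a generator by a conjugate perturbs the class of a commutator only by terms in $\mathfrak{m}M$, which vanish in $M_1$ --- though this distinction would matter at the level of $M_2$.
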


\begin{ntc}
We will denote $\s_i:=t_i-1$. Note that $\Lambda/\mathfrak{m}^k$ is isomorphic to 
$\mathbb{Z}[\s_1,\dots,\s_\ell]/\n^k$, where $\n$ is the ideal generated by $\s_1,\dots,\s_\ell$. 
The classes in $\Lambda/\mathfrak{m}^k$ are represented by polynomials in $\s_1,\dots,\s_\ell$ of 
degree less than~$k$. Note that the units are those polynomials whose degree zero coefficient equals~$\pm 1$. 
\end{ntc}

\begin{prop}[{\cite[Proposition 2.15]{accm:03a}}]
\label{propos-modk}
Let $\psi(X_1,...,X_m)$ be a word in the letters $\{X_1,...,X_m\}$. If $p_i,q_i\in G'$ and 
$p_i\overset{k}{\equiv}q_i$ $(i=1,...,m)$, then $[g,\psi(p_1,...,p_m)]\overset{k+1}{\equiv}[g,\psi(q_1,...,q_m)]$, 
$\forall g\in G$. In particular, if $p\in M_k(\mathcal{A})$ then $[g,p]$ is a well-defined element of $M_{k+1}(\mathcal{A})$; 
if $g=x_i$ this element will be written as~$\s_i p\in M_{k+1}(\mathcal{A})$.
\end{prop}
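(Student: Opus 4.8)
The plan is to reduce everything to a single computation—how conjugation by an element of $G$ acts on the Alexander module $M$—which is the mechanism that raises the $\mathfrak m$-adic filtration by one degree. Throughout I write the image of $h\in G'$ in $M=G'/G''$ additively as $\bar h$, and I use that the abelianization map $G'\to M$ is a homomorphism and that, by the very definition of the $\Lambda$-module structure, $\overline{ghg^{-1}}=\bar g\cdot\bar h$, where $\bar g$ is the class of $g$ in $H$. The first step is to record the commutator identity: for $g\in G$ and $h\in G'$ one has $[g,h]=(ghg^{-1})h^{-1}$, a product of two elements of $G'$, whence
\[
\overline{[g,h]}=\overline{ghg^{-1}}-\bar h=\bar g\cdot\bar h-\bar h=(\bar g-1)\,\bar h .
\]
Since the augmentation sends $\bar g\mapsto 1$, the element $\bar g-1$ lies in $\mathfrak m$, so taking a commutator with $g$ acts on $M$ as multiplication by an element of $\mathfrak m$ and in particular sends $\mathfrak m^{k}M$ into $\mathfrak m^{k+1}M$.

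Next I would analyze the effect of the word $\psi$ in $M$. Because $M$ is abelian, the image $\overline{\psi(p_1,\dots,p_m)}$ depends only on the exponent sums $e_i$ of each letter $X_i$ in $\psi$, namely $\overline{\psi(p_1,\dots,p_m)}=\sum_i e_i\,\bar p_i$. Subtracting the analogous identity for the $q_i$ and using the hypothesis $\bar p_i-\bar q_i\in\mathfrak m^kM$ gives
\[
\overline{\psi(p_1,\dots,p_m)}-\overline{\psi(q_1,\dots,q_m)}=\sum_i e_i\,(\bar p_i-\bar q_i)\in\mathfrak m^{k}M ,
\]
that is, $\psi(p_1,\dots,p_m)\overset{k}{\equiv}\psi(q_1,\dots,q_m)$.

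Finally I would combine the two steps. Writing $a=\psi(p_1,\dots,p_m)$ and $b=\psi(q_1,\dots,q_m)$, both in $G'$, the commutator identity yields
\[
\overline{[g,a]}-\overline{[g,b]}=(\bar g-1)(\bar a-\bar b)\in\mathfrak m\cdot\mathfrak m^{k}M=\mathfrak m^{k+1}M ,
\]
which is exactly $[g,\psi(p_1,\dots,p_m)]\overset{k+1}{\equiv}[g,\psi(q_1,\dots,q_m)]$. For the closing assertion, given $p\in M_k$ I would choose any lift $\tilde p\in G'$ with $\varphi_k(\tilde p)=p$ and define $[g,p]:=\varphi_{k+1}([g,\tilde p])$; two lifts satisfy $\tilde p\overset{k}{\equiv}\tilde p'$, so the already-proven case $m=1$, $\psi=X_1$ shows $[g,\tilde p]\overset{k+1}{\equiv}[g,\tilde p']$ and the element is independent of the lift. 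Taking $g=x_i$, the commutator identity reads $\overline{[x_i,\tilde p]}=(t_i-1)\,\bar{\tilde p}=\s_i\,\bar{\tilde p}$, which justifies the notation $\s_i p$.

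The computation is essentially bookkeeping, and I expect the only delicate point to be the careful passage between the multiplicative group structure on $G'$ and the additive $\Lambda$-module structure on $M$. The genuine content is the identity $\overline{[g,h]}=(\bar g-1)\bar h$ with $\bar g-1\in\mathfrak m$: it is precisely this that upgrades a congruence valid modulo $\mathfrak m^{k}$ to one valid modulo $\mathfrak m^{k+1}$ once a commutator is taken.
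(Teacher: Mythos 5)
Your proof is correct, and it takes essentially the same route as the source: the paper states this proposition without proof, citing \cite{accm:03a}, where the argument likewise rests on the identity $\overline{[g,h]}=(\bar g-1)\bar h$ for $h\in G'$, with $\bar g-1\in\mathfrak m$, so that a congruence modulo $\mathfrak m^{k}M$ is upgraded to one modulo $\mathfrak m^{k+1}M$ upon taking a commutator (there organized via the commutator identity $[g,pq]=[g,p]\cdot p[g,q]p^{-1}$ rather than your exponent-sum formula, an immaterial difference). Your treatment of well-definedness of $[g,p]$ for $p\in M_k$ via the case $\psi=X_1$ is exactly right.
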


Note that $M_k(\mathcal{A})$ is isomorphic (as an Abelian group) to the graduate 
$\gr^0 M_k(\mathcal{A})\oplus ...\oplus \gr^{k-1} M_k(\mathcal{A})$ by means of the  morphism
$$
p\cdot x_{i,j}\mapsto p_0x_{i,j} + p_1x_{i,j} + ... + p_{k-1}x_{i,j},
$$
where $p$ is a polynomial in $\{\s_1,...,\s_r\}$ and 
$p=p_0+...+p_{k-1}$ is its homogeneous decomposition. 
Note that this isomorphism is not canonical, since it depends on the given set of generators of~$G$. 

For instance, any automorphism of $G$ that sends $x_i$ to $x_i \alpha_i$, 
(with $\alpha_i\in G'$) induces an automorphism of $M_k(\mathcal{A})$:
\begin{equation}
\label{eq-key}
[x_i,x_j]\mapsto [x_i {\alpha_i},x_j {\alpha_j}]\overset{k}{\equiv}
[x_i,x_j]+t_j \s_i\alpha_j-t_i \s_j\alpha_i.
\end{equation}
Since this automorphism respects the filtration it also induces
an automorphism of $\gr M_k(\mathcal{A})$. Note that the automorphism of~$\gr M_k(\mathcal{A})$
it is always the identity but, in general, the automorphism of~$M_k(\mathcal{A})$ is non-trivial.

Let us study the relationship of this invariants with combinatorics. Let us fix
a combinatorics~$\mathscr{C}$ and a realization~$\cA$ of~$\mathscr{C}$.
In \S\ref{sec-rigid} we defined the free abelian module $H^{\mathscr{C}}_1$
generated by $x_L$, $L\in\mathcal{L}$, with the relation $\sum_{L\in\mathcal{L}} x_L=0$.
Recall that the abelian free group~$H_\cA$ is generated also by the
classes of positive meridians~$g_L$ subject to the relation $\sum_{L\in\mathcal{L}} g_L=0$
and there is a natural isomorphism between $H^{\mathscr{C}}_1$ and~$H_\cA$.
The composition  
\ifcd
\begin{equation*}
\begin{tikzcd}[column sep=.5em]
G_{\mathcal{A}}\arrow[rrrr]\arrow[drrrr,"h"]&&&&H_{\mathcal{A}}\arrow[d]&g_L\arrow[d, mapsto]\\
&&&&H_1^{\mathscr{C}}&x_L
\end{tikzcd}
\end{equation*}
\else
$h:G_\cA\to H_\cA\to H_1^{\mathscr{C}}$
\begin{equation*}
\begin{tikzpicture}[description/.style={fill=white,inner sep=2pt},baseline=(current bounding box.center)]
\matrix (m) [matrix of math nodes, row sep=2.5em,
column sep=0.5em, text height=1.5ex, text depth=0.25ex]
{G_{\mathcal{A}}&&&&&H_{\mathcal{A}}&g_L\\
{}&&&&&H_1^{\mathscr{C}}&x_L\\};
 \path[->,>=angle 90](m-1-1) edge  (m-1-6);
 \path[->,>=angle 90](m-1-1) edge node[above] {$h$} (m-2-6);
\path[->,>=angle 90](m-1-6) edge(m-2-6);
\path[|->,>=angle 90](m-1-7) edge(m-2-7);
\end{tikzpicture}
\end{equation*}
\fi 
is a \emph{meridian structure} 
of the group $G_\cA$. A meridian $g_{L_i}$ has been denoted $x_i$ above; we will identify
$x_i$ with $x_{L_i}$ without further notice. 

\begin{obs}
Note that given a \emph{meridian structure}~$h$ in $G_\cA$ it is in general
not possible to recover the conjugacy classes of the meridians, only their
homology classes are fixed.
\end{obs}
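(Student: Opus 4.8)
The plan is to read the statement as two separate assertions. That the homology classes are fixed is immediate from the definition of a meridian structure: $h$ is by construction the composition $G_\cA\to H_\cA\to H_1^{\mathscr C}$, so the value $h(g_L)=x_L$ on each meridian is part of the datum and is exactly its homology class. The whole content therefore lies in the \emph{negative} assertion, that $h$ does not determine the conjugacy class of the meridians; to prove it I would show that the fibre of $h$ over $x_L$ meets more than one conjugacy class, so that no conjugacy class can be read off from $h$ alone.

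The starting point is that $h$ factors through the abelianization $G_\cA\to H_\cA=G_\cA/G_\cA'$, where $G_\cA':=[G_\cA,G_\cA]$, and the second arrow is the natural isomorphism $H_\cA\cong H_1^{\mathscr C}$; hence $\ker h=G_\cA'$ and the fibre $h^{-1}(x_L)$ is the whole coset $g_L\,G_\cA'$. In particular every element $g_L\alpha$ with $\alpha\in G_\cA'$ has the same image $x_L$ under $h$, and these are exactly the elements of $G_\cA'$ occurring in the change-of-meridian formula~\eqref{eq-key}. It therefore suffices to exhibit, in some realization, an $\alpha\in G_\cA'$ for which $g_L\alpha$ is \emph{not} conjugate to $g_L$; then $h$ assigns the same datum to the two non-conjugate elements $g_L$ and $g_L\alpha$ of the coset, and the conjugacy class cannot be recovered.

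The cleanest witness is a pencil: for a realization $\cA$ consisting of three concurrent lines the complement is a $\mathbb C$-bundle over the complement of three points in $\mathbb P^1$, hence $G_\cA$ is free of rank two, say $G_\cA=\langle a,b\rangle$ with $a,b,(ab)^{-1}$ the three meridians. Taking $g_L=a$ and $\alpha=[a,b]\in G_\cA'$ gives $g_L\alpha=a^2ba^{-1}b^{-1}$, whose cyclically reduced length is $5$, whereas that of $a$ is $1$; since conjugacy classes in a free group are detected by cyclically reduced words, $g_L\alpha$ is not conjugate to $g_L$, while $h(g_L\alpha)=h(g_L)=x_L$. This proves the observation. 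The same phenomenon holds for \emph{any} non-abelian $G_\cA$, in particular for the realizations of $\AG$ studied here, since the coset $g_L\,G_\cA'$ then always meets several conjugacy classes; only for the abelian (generic) arrangements, where $G_\cA'=1$, is the conjugacy class trivially recovered, which is exactly why the statement reads ``in general''. The one point requiring care, and the main obstacle outside the free case, is certifying non-conjugacy: in a non-free $G_\cA$ one replaces the cyclic-length computation by one in the maximal metabelian quotient $G_\cA/G_\cA''$, that is, in the Alexander invariant $M(\cA)$, where $g_L\alpha$ conjugate to $g_L$ forces $\alpha$ to lie in the image of $w\mapsto[g_L^{-1},w]$, a proper non-additive subset of $M(\cA)$ that a generic $\alpha$ avoids.
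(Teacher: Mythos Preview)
The paper offers no proof of this observation; it is stated as a bare remark. Your proposal therefore supplies what the paper omits.

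Your argument is correct. The separation into the two assertions is the right reading, and the concurrent-lines example is a clean, self-contained witness: with $G_\cA\cong F_2$ the cyclic-length computation certifies that $a$ and $a[a,b]$ lie in the same $h$-fibre but in distinct conjugacy classes, which already establishes the ``in general'' claim. This is also the phenomenon underlying the AI-isomorphism test later in the paper, where one must allow $\varphi(\mu_{k,1})=\mu_{k,2}\,g_k$ for \emph{arbitrary} $g_k\in(G^2)'$ precisely because preserving~$h$ does not force $\mu_{k,2}\,g_k$ to be conjugate to~$\mu_{k,2}$.

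One small imprecision in your final paragraph: the image of $w\mapsto[g_L^{-1},w]$, read in $M_1$, is actually the additive subgroup spanned by the classes $x_{L,j}$, so ``non-additive'' is not the right descriptor there. What matters, and what you actually use, is only that this image is \emph{proper}, which is visible already in $M_1$ once some $x_{i,j}$ with $i,j\neq L$ survives independently. This does not affect the validity of your proof, since the free-group example already settles the observation.
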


We are going to define the \emph{naive Alexander invariant} $M^\mathscr{C}$ of the combinatorics~$\mathscr{C}$.
Let $H_1^{\mathscr{C},\Lambda}:=H_1^\mathscr{C}\otimes_{\mathbb{Z}}\Lambda$; then $M^\mathscr{C}$
is defined 
as the quotient of $H_1^{\mathscr{C},\Lambda}\wedge H_1^{\mathscr{C},\Lambda}$, 
with generators $x_{i,j}:=x_i\wedge x_j$, by a submodule generated by
two sets, one coming
coming from the combinatorics
\begin{equation}\label{eq-rels-comb}
\sum_{j=1}^m x_{i_j,i_k},\quad \forall P=\{L_{i_1},\dots,L_{i_m}\},\forall k\in\{2,\dots,m\}, 
\end{equation}
and other one coming from the Jacobi relations:
\begin{equation}
\sigma_{i_1} x_{i_2,i_3}+\sigma_{i_2} x_{i_3,i_1}+\sigma_{i_3} x_{i_1,i_2},\quad 
\forall L_{i_1},L_{i_2},L_{i_3}\in\mathcal{L}.
\end{equation}
From \ref{eq-rels-comb} we can forget the points~$P$ such that $P\in L$ for some particular~$L$.
The truncated naive Alexander invariants $M_k^\mathscr{C}$ and the graduates $\gr_k M^\mathscr{C}$
are defined accordingly. The following result is straightforward.

\begin{prop}
\label{prop-gr-comb}
For a realization $\cA$, the graduate group $\gr^j M=\gr^j M_k$ (if $j<k$) is isomorphic to
$\gr_k M^\mathscr{C}$. In particular, the graduate groups 
depend only on the combinatorics.
\end{prop}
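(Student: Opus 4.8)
The plan is to prove Proposition~\ref{prop-gr-comb} by showing that the two $\Lambda$-module presentations in play agree on the associated graded level, which removes all dependence on the choice of meridian conjugacy classes. First I would recall the presentation from Example~\ref{ejem-truncated}: for a realization $\cA$, the module $M=M(\cA)$ is the quotient of the free $\Lambda$-module on generators $x_{i,j}$ ($1\le i<j\le \ell$) by the Jacobi relations~\eqref{eq-jacobi} together with the relations $\tilde R_1,\dots,\tilde R_s$ coming from the presentation of $G_\cA$. The key observation is that each relation $R_k$ of $G_\cA$ is a commutator whose abelianized linear form $\tilde R_k$ has a well-defined \emph{lowest-degree part}, and that this lowest-degree part is determined purely by the combinatorics: it is exactly one of the relations~\eqref{eq-rels-comb} attached to a multiple point $P$, namely $\sum_{j=1}^m x_{i_j,i_k}$, as recorded in the remark following Example~\ref{ejem-truncated} where the degree-$0$ (i.e. $M_1$) incarnation of the relation at a point $P$ is displayed.

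The core step is therefore to compare the two modules \emph{filtration by filtration}. I would set up the $\n$-adic filtration on both $M$ and $M^\mathscr{C}$, where $\n=(\s_1,\dots,\s_\ell)$, and argue that the subquotient $\gr^j$ is computed by taking only the degree-homogeneous leading terms of all defining relations. On the one hand, the Jacobi relations are the same in both presentations (they are defined combinatorially to begin with). On the other hand, for the point-relations one must check that the higher-order corrections in $\tilde R_k$ — those arising from the $t_j$-factors in the expansion $[x_i,x_jx_k]=x_{i,j}+t_j x_{i,k}$ of~\eqref{eq-jacobi}, writing $t_j=1+\s_j$ — land in strictly higher $\n$-degree and hence die in the associated graded. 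Concretely, the relation read off at a multiple point has leading term $\sum_j x_{i_j,i_k}$ and a tail in $\n\cdot(\text{free module})$; passing to $\gr$ kills the tail and leaves precisely~\eqref{eq-rels-comb}. Since any two realizations of $\mathscr{C}$, and indeed the combinatorial model, produce identical leading forms, the graded modules coincide.

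I expect the main obstacle to be the bookkeeping that shows the presentation of $G_\cA$ contributes \emph{no relations beyond} those already accounted for by the multiple points — in other words, that the full set of $\tilde R_k$ has, modulo higher $\n$-degree and the Jacobi relations, the same span as~\eqref{eq-rels-comb}. This is where the specific form of the wiring/braid-monodromy relators matters: each relator is conjugate to a commutator localized at a point of $\cP$, so its leading linear form sees only the lines through that point. I would invoke that the relations of $G_\cA$ are (up to conjugation and the choice of meridians) in bijection with the local fundamental-group relations at the singular points, each of multiplicity $m$ contributing $m-1$ independent relations whose leading terms are exactly the combinatorial relations~\eqref{eq-rels-comb}.

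Finally, I would package these observations as the desired isomorphism. For $j<k$ the graded piece $\gr^j M$ is insensitive to truncation at level $k$ (truncating at $\mathfrak m^k$ only discards degrees $\ge k$), so $\gr^j M = \gr^j M_k$; and by the leading-term comparison above this equals $\gr_k M^\mathscr{C}$, which by construction is built entirely from the combinatorial data $(\cL,\cP)$. Hence the graded groups depend only on $\mathscr{C}$, which is the assertion. The word \emph{straightforward} in the statement is, I believe, justified precisely because once one writes $t_j=1+\s_j$ the separation of leading and higher-order terms is mechanical; the only genuine content is the identification of leading forms with the point-relations, which is the content of the remark after Example~\ref{ejem-truncated}.
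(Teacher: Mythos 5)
The paper offers no written proof of this proposition --- it is introduced with ``the following result is straightforward'' --- so your proposal can only be measured against the folklore argument that sentence compresses, and on that count your overall route is the intended one: the Jacobi relations \eqref{eq-jacobi} are common to both presentations, the degree-zero part of each point relation is exactly \eqref{eq-rels-comb} (as recorded in the remark after Example \ref{ejem-truncated}), and your observation that $\gr^j M=\gr^j M_k$ for $j<k$ because truncation by $\mathfrak{m}^k$ only discards degrees $\geq k$ is correct and is all the parenthetical in the statement amounts to.

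There is, however, one inference that would fail as written. From ``$\tilde R_a=$ unit $\cdot$ (combinatorial leading form) $+$ tail of positive $\n$-order'' you conclude that ``passing to $\gr$ kills the tail,'' hence the graded modules coincide. But $\gr(F/N)$ is the quotient of $\gr F$ by the module of leading forms of \emph{all} elements of $N$, not merely by the leading forms of a chosen generating set; combinations of relators whose leading terms cancel can produce new graded relations invisible to your term-by-term comparison, and for a general filtered presentation the two quotients genuinely differ. To close this you must show the tails are \emph{absorbable}: writing $N$ for the relation submodule of the realization and $N'$ for the combinatorial one (Jacobi plus \eqref{eq-rels-comb}), one needs each tail $s_a:=\tilde R_a-u_a\tilde R_a^{(0)}$ to lie in $\mathfrak{m}N'$ --- equivalently, the conjugation corrections, which are $\s_\bullet$-multiples of classes $\widetilde{[\mu,g]}$, must be exhibited as consequences of the Jacobi elements and $\s$-multiples of the point relations. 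Successive approximation then yields $N+\mathfrak{m}^kF=N'+\mathfrak{m}^kF$ for every $k$, which is precisely what is needed for all graded pieces to coincide. This verification --- in substance the linearization theorem of Cohen--Suciu for Alexander invariants of arrangements --- is the real content hiding behind the word ``straightforward''; your proposal correctly isolates that the leading forms are the point relations, but treats the tails as mere higher-order noise, and that is the one step requiring an actual argument rather than bookkeeping.
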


\begin{obs}\label{obs-combi}
Note that the $\Lambda$-action on $\gr_k M^\mathscr{C}$ is reduced to the $\mathbb{Z}$-action.
Moreover $\gr_1 M^\mathscr{C}\equiv M_1$ coincides with 
$\left(H_1^\mathscr{C}\wedge H_1^\mathscr{C}\right)/H_2^\mathscr{C}$, compare \eqref{eq-rels-comb} and 
\eqref{eq-H2}.
\end{obs}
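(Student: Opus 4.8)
The statement splits into two independent assertions, both of which I would settle by unwinding the definitions; no input beyond the formal properties of the $\mathfrak{m}$-adic filtration is needed. The plan is to treat the two claims in turn.

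For the first claim the point is that the augmentation ideal raises filtration degree. Every $\lambda\in\Lambda$ can be written $\lambda=c+\mu$, where $c\in\mathbb{Z}$ is the image of $\lambda$ under the augmentation $\Lambda\to\mathbb{Z}$, $h\mapsto 1$, and $\mu\in\mathfrak{m}$. By construction $\mu$ maps $\mathfrak{m}^{k-1}M^\mathscr{C}$ into $\mathfrak{m}^{k}M^\mathscr{C}$, hence acts as $0$ on the quotient $\gr_k M^\mathscr{C}=\mathfrak{m}^{k-1}M^\mathscr{C}/\mathfrak{m}^{k}M^\mathscr{C}$. Therefore $\lambda$ acts on $\gr_k M^\mathscr{C}$ as multiplication by the integer $c$, i.e.\ the $\Lambda$-action factors through $\Lambda/\mathfrak{m}\cong\mathbb{Z}$. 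Equivalently, each generator $\sigma_i=t_i-1$ of $\mathfrak{m}$ acts trivially on every graded piece, which is exactly the assertion.

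For the second claim I would compute $\gr_1 M^\mathscr{C}=M^\mathscr{C}/\mathfrak{m}M^\mathscr{C}=M_1$ directly from the presentation of $M^\mathscr{C}$ by reducing modulo $\mathfrak{m}$ (that is, setting every $\sigma_i=0$), which is legitimate by right-exactness of $-\otimes_{\Lambda}\Lambda/\mathfrak{m}$. Under this reduction the ambient module $H_1^{\mathscr{C},\Lambda}\wedge H_1^{\mathscr{C},\Lambda}$ becomes $H_1^\mathscr{C}\wedge H_1^\mathscr{C}$, since $\Lambda/\mathfrak{m}\cong\mathbb{Z}$. The Jacobi relations $\sigma_{i_1}x_{i_2,i_3}+\sigma_{i_2}x_{i_3,i_1}+\sigma_{i_3}x_{i_1,i_2}$ all lie in $\mathfrak{m}\cdot\big(H_1^{\mathscr{C},\Lambda}\wedge H_1^{\mathscr{C},\Lambda}\big)$, because each summand carries a factor $\sigma_{i_j}$; hence they vanish in the quotient and contribute nothing. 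The combinatorial relations \eqref{eq-rels-comb} have integral coefficients and therefore survive. Consequently $M_1$ is the quotient of $H_1^\mathscr{C}\wedge H_1^\mathscr{C}$ by the subgroup generated by the relations \eqref{eq-rels-comb}.

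It then remains to identify that subgroup with $H_2^\mathscr{C}$. Here I would use the defining expression $x_P=\sum_{L<P}x_L$: for a point $P=\{L_{i_1},\dots,L_{i_m}\}$ and $L=L_{i_k}<P$ one has
\begin{equation*}
x_{L_{i_k},P}=x_{L_{i_k}}\wedge x_P=\sum_{j=1}^m x_{i_k}\wedge x_{i_j}=-\sum_{j=1}^m x_{i_j,i_k},
\end{equation*}
so each generator $x_{L,P}$ of $H_2^\mathscr{C}$ is, up to sign, one of the relations in \eqref{eq-rels-comb}. The only discrepancy is the range of $k$: $H_2^\mathscr{C}$ uses all $k\in\{1,\dots,m\}$ while \eqref{eq-rels-comb} uses $k\in\{2,\dots,m\}$. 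This is harmless, because the alternating identity $\sum_{k=1}^m x_{L_{i_k},P}=\sum_{k,j}x_{i_k}\wedge x_{i_j}=0$ shows the $m$ generators satisfy one linear relation, so any $m-1$ of them already span the same subgroup. Thus the subgroup generated by \eqref{eq-rels-comb} equals $H_2^\mathscr{C}$ and $M_1\cong\big(H_1^\mathscr{C}\wedge H_1^\mathscr{C}\big)/H_2^\mathscr{C}$, as claimed. The whole argument is bookkeeping; the only place demanding care is matching the two indexing conventions and the signs in this last step.
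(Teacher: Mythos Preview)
Your argument is correct and is precisely the verification the paper invites: the observation carries no separate proof in the text beyond the instruction to ``compare \eqref{eq-rels-comb} and \eqref{eq-H2}'', and your write-up simply makes that comparison explicit, including the sign and index bookkeeping.
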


The analog result for $M_k$ is completely false. We end this section with
a result, immediate consequence of Proposition~\ref{propos-modk}, 
which explains why $M_k$ is a more manageable object.

\begin{cor}
\label{lineal}
Formula~{\rm(\ref{eq-key})} only depends on $\varphi_{k-1}(\alpha_i)\in M_{k-1}$.
\end{cor}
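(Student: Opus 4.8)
The plan is to read the claim directly off Proposition~\ref{propos-modk}, the only subtlety being an index shift. The right-hand side of~\eqref{eq-key} is the element
\[
x_{i,j} + t_j\s_i\alpha_j - t_i\s_j\alpha_i \in M_k(\mathcal{A}),
\]
and since the leading term $x_{i,j}=[x_i,x_j]$ does not involve the $\alpha$'s, it suffices to control each of the two remaining summands. By symmetry I focus on $t_j\s_i\alpha_j$; the argument for $t_i\s_j\alpha_i$ is identical after exchanging the roles of $i$ and $j$.

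First I would observe that $\s_i\alpha_j$ is, by definition, the class $[x_i,\alpha_j]$ computed in $M_k(\mathcal{A})$, where $\alpha_j$ is viewed as an element of $G'$. Now I apply Proposition~\ref{propos-modk} with the order parameter $k-1$ in place of $k$, with $g=x_i$, and with $\psi$ the one-letter identity word $\psi(X)=X$: if $\alpha_j\overset{k-1}{\equiv}\alpha_j'$, then $[x_i,\alpha_j]\overset{k}{\equiv}[x_i,\alpha_j']$. In other words, $\s_i\alpha_j\in M_k(\mathcal{A})$ depends on $\alpha_j$ only through its image $\varphi_{k-1}(\alpha_j)\in M_{k-1}(\mathcal{A})$.

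It then remains to note that multiplication by $t_j$ is a well-defined operation on $M_k(\mathcal{A})$ coming from its $\Lambda/\mathfrak{m}^k$-module structure, so $t_j\s_i\alpha_j$ still depends on $\alpha_j$ only through $\varphi_{k-1}(\alpha_j)$. Running the mirror argument on the term $t_i\s_j\alpha_i$ shows that the formula depends on $\alpha_i$ only through $\varphi_{k-1}(\alpha_i)$, which is exactly the assertion.

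The only point requiring care---and hence the ``hard part'', though it is really bookkeeping---is the index shift: Proposition~\ref{propos-modk} produces a congruence modulo $k+1$ out of a hypothesis modulo $k$, so to land in $M_k(\mathcal{A})$ one must feed it the hypothesis modulo $k-1$. This is precisely what upgrades ``$\alpha_j$ determines the term'' to ``$\varphi_{k-1}(\alpha_j)$ determines the term''. No genuine computation is involved, which is why the statement is recorded as an immediate corollary of Proposition~\ref{propos-modk}.
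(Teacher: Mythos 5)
Your argument is correct and is exactly the intended reading: the paper records this statement without proof as an ``immediate consequence'' of Proposition~\ref{propos-modk}, and your unpacking---apply the proposition with order $k-1$, $g=x_i$, and the one-letter word, so that $\s_i\alpha_j\in M_k(\mathcal{A})$ is determined by $\varphi_{k-1}(\alpha_j)$, then use the $\Lambda/\mathfrak{m}^k$-module structure for the factors $t_i,t_j$---is precisely the index-shifted application the authors have in mind. Nothing is missing.
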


\section{Fundamental groups and the Alexander invariant isomorphism test}
\label{sec-fg}

In this section the main Alexander invariant homomorphism test will be computed. In order to do so
we will need to provide with a presentation of the fundamental groups of two realizations of the 
combinatorics $\AG$. This information will allow us to give a presentation for the truncated Alexander
invariants and finally we will prove the failure of the \emph{Alexander invariant isomorphism test}.
The homological rigidity of the combinatorics will allow for the Alexander invariant isomorphism test 
to be very close to an isomorphism test for fundamental groups.

\subsection{Fundamental groups and braided wiring diagrams}
\label{sec-fg1}
\mbox{}

Our purpose is to compare the fundamental groups $G^i$ of the complements $X_i:=\PP^2\setminus \cA^{\xi^i}$,
$i=1,\dots,4$ of the different realizations of the combinatorics $\AG$ as defined in \S~\ref{sec-combinatorics}.
Note that $X_i$ and $X_{5-i}$ are homeomorphic via the conjugation automorphism $\PP^2\to \PP^2$, defined as
$[x:y:z]\mapsto [\bar x:\bar y:\bar z]$. 
Therefore $G^i\cong G^{5-i}$ and thus it only remains to study 
whether or not~$G^1$ and $G^2$ are isomorphic.
If we have fixed the meridian structure
for each $G^i$ (associated to the realizations), the isomorphism
$G^i\to G^{5-i}$ induces $-1_{H_1^{\mathscr{C}}}$ on $H_1^{\mathscr{C}}$. 

Consider the realizations $\cA^{\xi^i}$, $i=1,2$. In order to study the complements $X_1$ and $X_2$ we will 
use an adaptation of the Zariski-Van Kampen method to obtain a presentation of $G^i$. To begin with, one needs
to project $\PP^2$ from a point not on $\cA^{\xi^i}$ and then obtain a system of braids, whose number of strings
is the degree of the arrangement (i.e., the number of lines). The action of these braids on a free group will provide the required set of
relations on the group. Our variation of the method allows for projections from a point on the arrangement, 
namely the point $P_1$, that is $\pi:\PP^2\setminus \{P^i_1\}\to \PP^1$ defined by $\pi(P)=\overline{PP^i_1}$, 
where $\PP^1$ is identified with the space of lines in $\PP^2$ passing through $P_1^i$. This causes a great 
deal of simplification since the braids obtained have as many strings less as the multiplicity of the chosen 
projection point in the arrangement.

In our situation the number of strings drops from 12, in the classical method, to 7. 
Note that $L^i_1,L^i_4,L^i_5,L^i_9$, and $L^i_{12}$ are points in the image of $\pi$. 
Moreover, denote by $T_1,...,T_4$ the lines joining $P^i_1$ and the remaining double points of the arrangement
(that is, points not on any line passing through $P^i_1$), also denote by $X_i:=\PP^2\setminus \cA^{\xi^i}$ the
complement of the arrangement, and finally $\tilde X_i:=X_i\setminus \bigcup_{j=1}^4 T_j$. Then 
$\pi|_{\tilde X_i}$ is a locally trivial fibration whose fiber is $F:=\CC\setminus \{7\ \text{points}\}$.
Technically, if one blows up the point $P^i_1$, then this fibration can be extended to the exceptional divisor,
which becomes a canonical section. This fibration can be understood from the action of the fundamental group of 
the image $\PP^1\setminus \{9\ \text{points}\}$ on the fiber~$F$. This action is called the monodromy of the 
fibration and can be read off its \emph{braided wiring diagram}. 

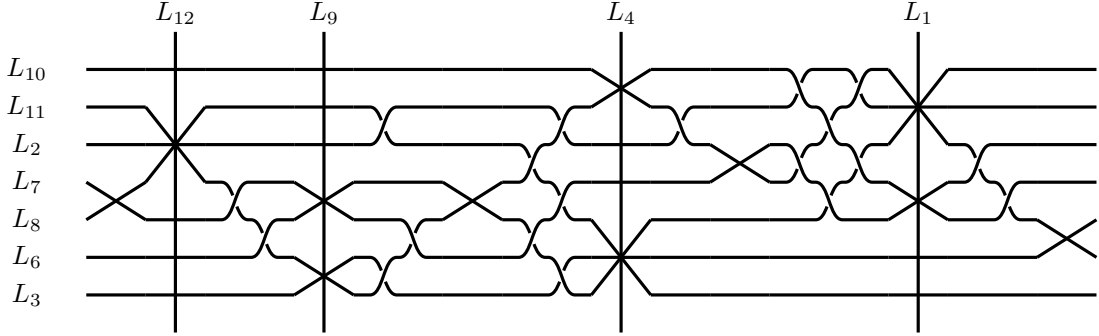
\begin{figure}[ht]
\begin{tikzpicture}[xscale=.79,yscale=.5,vertice/.style={draw,circle,fill,minimum size=0.25cm,inner sep=0}]
\node at (-1,-0) {$L_{10}$};
\node at (-1,-1) {$L_{11}$};
\node at (-1,-2) {$L_{2}$};
\node at (-1,-3) {$L_{7}$};
\node at (-1,-4) {$L_{8}$};
\node at (-1,-5) {$L_{6}$};
\node at (-1,-6) {$L_{3}$};
\begin{scope}[xshift=0cm]
\foreach \x in {0,...,2}
\draw[line width=1.2] (0,-\x)--(1,-\x);\foreach \x in {3,...,4}
\def\y{\x-7}
\draw[line width=1.2] (0,-\x)--(1,\y);\foreach \x in {5,...,6}
\draw[line width=1.2] (0,-\x)--(1,-\x);;
 \end{scope}
\begin{scope}[xshift=1cm]
\draw[line width=1.2] (0.5,1)--(0.5,-7);
\node at (.5,1.5) {$L_{12}$};
\foreach \x in {0,...,0}
\draw[line width=1.2] (0,-\x)--(1,-\x);\foreach \x in {1,...,3}
\def\y{\x-4}
\draw[line width=1.2] (0,-\x)--(1,\y);\foreach \x in {4,...,6}
\draw[line width=1.2] (0,-\x)--(1,-\x);;
 \end{scope}
\begin{scope}[xshift=2cm]
\braid[line width=1.2,xscale=-1,rotate= -90,height=.5cm,width=1cm,number of strands=7] 
at (0,0)
s_4^{-1} s_5^{-1} ;;
 \end{scope}
\begin{scope}[xshift=3.5cm]
\draw[line width=1.2] (0.5,1)--(0.5,-7);
\node at (.5,1.5) {$L_{9}$};
\foreach \x in {0,...,2}
\draw[line width=1.2] (0,-\x)--(1,-\x);\foreach \x in {3,...,4}
\def\y{\x-7}
\draw[line width=1.2] (0,-\x)--(1,\y);\foreach \x in {5,...,6}
\def\y{\x-11}
\draw[line width=1.2] (0,-\x)--(1,\y);;
 \end{scope}
\begin{scope}[xshift=4.5cm]
\braid[line width=1.2,xscale=-1,rotate= -90,height=.5cm,width=1cm,number of strands=7] 
at (0,0)
s_6^{-1}-s_2^{-1} s_5 ;;
 \end{scope}
\begin{scope}[xshift=6cm]
\foreach \x in {0,...,2}
\draw[line width=1.2] (0,-\x)--(1,-\x);\foreach \x in {3,...,4}
\def\y{\x-7}
\draw[line width=1.2] (0,-\x)--(1,\y);\foreach \x in {5,...,6}
\draw[line width=1.2] (0,-\x)--(1,-\x);;
 \end{scope}
\begin{scope}[xshift=7cm]
\braid[line width=1.2,xscale=-1,rotate= -90,height=.5cm,width=1cm,number of strands=7] 
at (0,0)
s_5^{-1}-s_3^{-1} s_4^{-1}-s_2^{-1}-s_6 ;;
 \end{scope}
\begin{scope}[xshift=8.5cm]
\draw[line width=1.2] (0.5,1)--(0.5,-7);
\node at (.5,1.5) {$L_{4}$};
\foreach \x in {0,...,1}
\def\y{\x-1}
\draw[line width=1.2] (0,-\x)--(1,\y);\foreach \x in {2,...,3}
\draw[line width=1.2] (0,-\x)--(1,-\x);\foreach \x in {4,...,6}
\def\y{\x-10}
\draw[line width=1.2] (0,-\x)--(1,\y);;
 \end{scope}
\begin{scope}[xshift=9.5cm]
\braid[line width=1.2,xscale=-1,rotate= -90,height=.5cm,width=1cm,number of strands=7] 
at (0,0)
s_2 ;;
 \end{scope}
\begin{scope}[xshift=10.5cm]
\foreach \x in {0,...,1}
\draw[line width=1.2] (0,-\x)--(1,-\x);\foreach \x in {2,...,3}
\def\y{\x-5}
\draw[line width=1.2] (0,-\x)--(1,\y);\foreach \x in {4,...,6}
\draw[line width=1.2] (0,-\x)--(1,-\x);;
 \end{scope}
\begin{scope}[xshift=11.5cm]
\braid[line width=1.2,xscale=-1,rotate= -90,height=.5cm,width=1cm,number of strands=7] 
at (0,0)
s_3 -s_1 s_4- s_2 s_3 -s_1^{-1} ;;
 \end{scope}
\begin{scope}[xshift=13.5cm]
\draw[line width=1.2] (0.5,1)--(0.5,-7);
\node at (.5,1.5) {$L_{1}$};
\foreach \x in {0,...,2}
\def\y{\x-2}
\draw[line width=1.2] (0,-\x)--(1,\y);\foreach \x in {3,...,4}
\def\y{\x-7}
\draw[line width=1.2] (0,-\x)--(1,\y);\foreach \x in {5,...,6}
\draw[line width=1.2] (0,-\x)--(1,-\x);;
 \end{scope}
\begin{scope}[xshift=14.5cm]
\braid[line width=1.2,xscale=-1,rotate= -90,height=.5cm,width=1cm,number of strands=7] 
at (0,0)
s_3^{-1} s_4^{-1} ;;
 \end{scope}
\begin{scope}[xshift=16cm]
\foreach \x in {0,...,3}
\draw[line width=1.2] (0,-\x)--(1,-\x);\foreach \x in {4,...,5}
\def\y{\x-9}
\draw[line width=1.2] (0,-\x)--(1,\y);\foreach \x in {6,...,6}
\draw[line width=1.2] (0,-\x)--(1,-\x);;
 \end{scope}
\end{tikzpicture}
\caption{Wiring diagram of the arrangement $\mathcal{A}^{\zeta_5}$}
\label{fig:wiring1}
\end{figure}

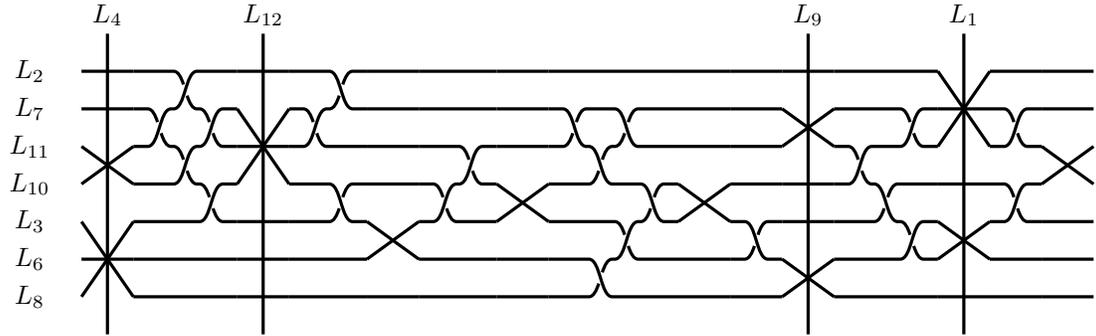
\begin{figure}[ht]
\begin{tikzpicture}[xscale=.69,yscale=.5,vertice/.style={draw,circle,fill,minimum size=0.25cm,inner sep=0}]
\node at (-1,-0) {$L_{2}$};
\node at (-1,-1) {$L_{7}$};
\node at (-1,-2) {$L_{11}$};
\node at (-1,-3) {$L_{10}$};
\node at (-1,-4) {$L_{3}$};
\node at (-1,-5) {$L_{6}$};
\node at (-1,-6) {$L_{8}$};
\begin{scope}[xshift=0cm]
\draw[line width=1.2] (0.5,1)--(0.5,-7);
\node at (.5,1.5) {$L_{4}$};
\foreach \x in {0,...,1}
\draw[line width=1.2] (0,-\x)--(1,-\x);\foreach \x in {2,...,3}
\def\y{\x-5}
\draw[line width=1.2] (0,-\x)--(1,\y);\foreach \x in {4,...,6}
\def\y{\x-10}
\draw[line width=1.2] (0,-\x)--(1,\y);;
\end{scope}
\begin{scope}[xshift=1cm]
\braid[line width=1.2,xscale=-1,rotate= -90,height=.5cm,width=1cm,number
of strands=7]  at (0,0)
s_2^{-1} s_3^{-1} -s_1^{-1} s_4^{-1}- s_2^{-1} ;;
 \end{scope}
\begin{scope}[xshift=3cm]
\draw[line width=1.2] (0.5,1)--(0.5,-7);
\node at (.5,1.5) {$L_{12}$};
\foreach \x in {0,...,0}
\draw[line width=1.2] (0,-\x)--(1,-\x);\foreach \x in {1,...,3}
\def\y{\x-4}
\draw[line width=1.2] (0,-\x)--(1,\y);\foreach \x in {4,...,6}
\draw[line width=1.2] (0,-\x)--(1,-\x);;
 \end{scope}
\begin{scope}[xshift=4cm]
\braid[line width=1.2,xscale=-1,rotate= -90,height=.5cm,width=1cm,number
of strands=7]  at (0,0)
s_2^{-1} s_1 -s_4 ;;
 \end{scope}
\begin{scope}[xshift=5.5cm]
\foreach \x in {0,...,3}
\draw[line width=1.2] (0,-\x)--(1,-\x);\foreach \x in {4,...,5}
\def\y{\x-9}
\draw[line width=1.2] (0,-\x)--(1,\y);\foreach \x in {6,...,6}
\draw[line width=1.2] (0,-\x)--(1,-\x);;
 \end{scope}
\begin{scope}[xshift=6.5cm]
\braid[line width=1.2,xscale=-1,rotate= -90,height=.5cm,width=1cm,number
of strands=7]  at (0,0)
s_4^{-1} s_3^{-1} ;;
 \end{scope}
\begin{scope}[xshift=8cm]
\foreach \x in {0,...,2}
\draw[line width=1.2] (0,-\x)--(1,-\x);\foreach \x in {3,...,4}
\def\y{\x-7}
\draw[line width=1.2] (0,-\x)--(1,\y);\foreach \x in {5,...,6}
\draw[line width=1.2] (0,-\x)--(1,-\x);;
 \end{scope}\begin{scope}[xshift=9cm]
\braid[line width=1.2,xscale=-1,rotate= -90,height=.5cm,width=1cm,number
of strands=7]  at (0,0)
s_2 s_3 -s_6 s_2- s_5^{-1} s_4 ;;
 \end{scope}
\begin{scope}[xshift=11.5cm]
\foreach \x in {0,...,2}
\draw[line width=1.2] (0,-\x)--(1,-\x);\foreach \x in {3,...,4}
\def\y{\x-7}
\draw[line width=1.2] (0,-\x)--(1,\y);\foreach \x in {5,...,6}
\draw[line width=1.2] (0,-\x)--(1,-\x);;
 \end{scope}
\begin{scope}[xshift=12.5cm]
\braid[line width=1.2,xscale=-1,rotate= -90,height=.5cm,width=1cm,number
of strands=7]  at (0,0)
s_5 ;;
 \end{scope}
\begin{scope}[xshift=13.5cm]
\draw[line width=1.2] (0.5,1)--(0.5,-7);
\node at (.5,1.5) {$L_{9}$};
\foreach \x in {0,...,0}
\draw[line width=1.2] (0,-\x)--(1,-\x);\foreach \x in {1,...,2}
\def\y{\x-3}
\draw[line width=1.2] (0,-\x)--(1,\y);\foreach \x in {3,...,4}
\draw[line width=1.2] (0,-\x)--(1,-\x);\foreach \x in {5,...,6}
\def\y{\x-11}
\draw[line width=1.2] (0,-\x)--(1,\y);;
 \end{scope}
\begin{scope}[xshift=14.5cm]
\braid[line width=1.2,xscale=-1,rotate= -90,height=.5cm,width=1cm,number
of strands=7]  at (0,0)
s_3^{-1} s_4 s_5^{-1} -s_2^{-1} ;;
 \end{scope}
\begin{scope}[xshift=16.5cm]
\draw[line width=1.2] (0.5,1)--(0.5,-7);
\node at (.5,1.5) {$L_{1}$};
\foreach \x in {0,...,2}
\def\y{\x-2}
\draw[line width=1.2] (0,-\x)--(1,\y);\foreach \x in {3,...,3}
\draw[line width=1.2] (0,-\x)--(1,-\x);\foreach \x in {4,...,5}
\def\y{\x-9}
\draw[line width=1.2] (0,-\x)--(1,\y);\foreach \x in {6,...,6}
\draw[line width=1.2] (0,-\x)--(1,-\x);;
 \end{scope}
\begin{scope}[xshift=17.5cm]
\braid[line width=1.2,xscale=-1,rotate= -90,height=.5cm,width=1cm,number
of strands=7]  at (0,0)
s_2^{-1} -s_4^{-1} ;;
 \end{scope}
\begin{scope}[xshift=18.5cm]
\foreach \x in {0,...,1}
\draw[line width=1.2] (0,-\x)--(1,-\x);\foreach \x in {2,...,3}
\def\y{\x-5}
\draw[line width=1.2] (0,-\x)--(1,\y);\foreach \x in {4,...,6}
\draw[line width=1.2] (0,-\x)--(1,-\x);;
 \end{scope}
 \end{tikzpicture}
\caption{Wiring diagram of the arrangement $\mathcal{A}^{\zeta_5^2}$}
\label{fig:wiring2}
\end{figure}

In a nutshell, a braided wiring diagram describes the preimage in $\cA^{\xi^i}$ by $\pi$ of a closed path 
$\gamma$ on the base $\PP^1$ starting at a base point and going through all the points in the discriminant of~$\pi$. 
This will be represented by a degenerated braid with crossings and multiple points (one multiple point for each singular point of the arrangement except for 
$P^i_1$). For more details, see~\cite{Cohen-Suciu-braid}. To the original definition of braided wiring diagram we
have added the possibility of projecting from a singular point of the arrangement. This results in the vertical 
lines shown in the figures, corresponding to the degenerated fibers in the arrangement which have to be plotted too.
Moreover, since the loop surrounding all the points on the discriminant is trivial in the fundamental group 
of the base, the closed path $\gamma$ could skip one point on the discriminant and still contain all the 
necessary information to recover the fibration.

Braided wiring diagrams for $\cA^{\xi^i}$, $i=1,2$ as described above for paths skipping the image of the 
line $L^i_5$ are shown in Figures~\ref{fig:wiring1} and~\ref{fig:wiring2}. These plots can be obtained 
in a precise way, using the fact that lines can be parametrized.


In order to simplify the way of computing the fundamental groups,
we perturb a little bit the projection point inside the line at infinity to obtain a wiring diagram
without vertical lines,
see~\cite{bgb:14a}. We obtain
a wiring diagram of~$11$ lines, where the former 
vertical lines start up in the left-hand side (the rightest the highest) and end below
in the right-hand side. From these diagrams one obtains immediately the braid
monodromy which allows to compute the fundamental group 
$G^i:=\pi_1(X_i)$ as follows.
Let $\mu_i$ be a geometric basis of meridians  generating the fundamental group of the fiber (in our case 
$\FF_{11}=\pi_1(\CC\setminus \{11\ \text{points}\})$).
The fundamental group $G^i$ is generated by $\{\mu_k\}_{k=1}^{11}$.

Let us explain how each point $Q$ on the discriminant of the projection $\pi$ induces
a number of relations. The point~$Q$ is associated
to a multiple point  involving  $m_Q$ strands with indices
$i_Q+1,\dots,i_Q+m_Q$ (these indices correspond to the order of the strands near~$Q$, not to
the labeling of the lines). Turning around the point~$Q$
is associated to the braid $\Delta_Q^2$, which consists on the full-twist
of those $m$~strands, and straight lines for the other strands.
If $\mu_1^Q,\dots,\mu_{11}^Q$ is a geometric basis of meridians in a vertical line close to~$x=Q$,
the relations obtained are:
$$
\mu_j^Q=(\mu_j^Q)^{\Delta_Q^2},\quad 1\leq j\leq 11.
$$
The exponent stands for the geometric action of the braid group on the free group.
Any one of above the relations is a consequence of the other ones, but we can be more precise.
If $\mu(Q)=\prod_{j=1}^{m_Q}\mu_j$, the above relations read as
\begin{equation}
 \mu_j^Q=
\begin{cases}
\mu(Q)^{-1}\cdot\mu_j\cdot\mu(Q)&\text{ if }j=i_Q+1,\dots,i_Q+m_Q,\\
\mu_j&\text{ otherwise.}        
\end{cases}
\end{equation}
Eliminating unnecessary relations, we keep:
\begin{equation}
\mu_{i_Q+j}^Q=
\mu(Q)^{-1}\cdot\mu_j\cdot\mu(Q)\quad \forall
j=1,\dots,m_Q-1.       
\end{equation}
The wiring diagram provides the global information which allows 
to deal with all these relations together. Namely there is a braid $\beta_Q$ connecting 
the generic vertical
line close to~$Q$ and a generic vertical line in left-hand side of the diagram. 
The global braid is $\alpha_Q:=\beta_Q^{-1}\cdot\Delta_Q^2\cdot\beta_Q$. 
This braid produces the relations
\begin{equation}
\label{eq-rel1}
\mu_i=\mu_i^{\alpha_Q},\quad 1\leq i\leq 11.
\end{equation}
As before, we can reduce these relations as:
\begin{equation}
\label{eq-rel2}
\left[\mu_{i_Q+j}^{\beta_Q},\mu(Q)^{\beta_Q}\right]=1,\quad j=1,\dots,m_Q-1.
\end{equation}
One has the following.

\begin{prop}
\label{prop-zariski}
The fundamental group $G^i$ admits a presentation
$$
\langle \mu_k, k\in\{1,\dots,11\}\mid \eqref{eq-rel2}\rangle.
$$
\end{prop}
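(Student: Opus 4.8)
The plan is to obtain the presentation as the output of the Zariski--Van Kampen method applied to the fibration produced by the perturbed projection. First I would record the geometric input already assembled in this section. After moving the centre to a generic point of the line at infinity and blowing it up, $\pi$ restricts to a locally trivial fibration over the complement $B'$ of its discriminant in $\PP^1$, with generic fibre $F=\CC\setminus\{11\ \text{points}\}$ and with the exceptional divisor as a section contained in $X_i$; the eleven punctures of $F$ correspond to the eleven lines distinct from $L_5^i$ and carry the meridians $\mu_1,\dots,\mu_{11}$. Since $B'$ is aspherical and $F$ connected, the section splits the homotopy exact sequence of the fibration,
\begin{equation*}
1\longrightarrow \pi_1(F)\longrightarrow \pi_1(X_i')\longrightarrow \pi_1(B')\longrightarrow 1,
\end{equation*}
where $X_i'$ denotes the fibred part, so that $\pi_1(X_i')\cong \FF_{11}\rtimes\pi_1(B')$ with the action given exactly by the braid monodromy encoded in the wiring diagrams of Figures~\ref{fig:wiring1} and~\ref{fig:wiring2}.

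Next I would recover $\pi_1(X_i)=\pi_1(\PP^2\setminus\cA^{\xi^i})$ by re-inserting the fibres over the discriminant. By van Kampen's theorem in the fibred situation, filling in the fibre over a point $Q$ of the discriminant forces the corresponding base loop to act trivially on the fibre group, so $\pi_1(X_i)$ is the quotient of $\FF_{11}=\langle\mu_1,\dots,\mu_{11}\rangle$ by the normal closure of the monodromy relations $\mu_k=\mu_k^{\alpha_Q}$ of \eqref{eq-rel1}, one family for each singular point $Q$ on the discriminant. The global braid $\alpha_Q=\beta_Q^{-1}\,\Delta_Q^2\,\beta_Q$ is read directly from the diagram: $\beta_Q$ is the portion of the wiring diagram joining the base fibre on the left to a fibre near $Q$, and $\Delta_Q^2$ is the full twist on the $m_Q$ strands meeting at $Q$.

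It then remains to reduce \eqref{eq-rel1} to the stated relations \eqref{eq-rel2}. For a fixed $Q$ the full twist $\Delta_Q^2$ is central in the braid group on the strands $i_Q+1,\dots,i_Q+m_Q$, hence on $\pi_1(F)$ it conjugates each of those local meridians by the local word $\mu(Q)^{\beta_Q}$ and fixes all the others. Thus $\mu_k=\mu_k^{\alpha_Q}$ is vacuous for the strands missing $Q$, while for the $m_Q$ strands through $Q$ it becomes $[\mu_{i_Q+j}^{\beta_Q},\mu(Q)^{\beta_Q}]=1$; since $\mu(Q)^{\beta_Q}$ commutes with itself, the relation for $j=m_Q$ is a consequence of those for $j=1,\dots,m_Q-1$, leaving precisely \eqref{eq-rel2}. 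Finally, because the concatenation of the loops around all discriminant points is null-homotopic in $\PP^1$, the family attached to the single skipped point (the image of $L_5^i$) follows from the remaining relations and may be dropped, which is also why the presentation uses eleven rather than twelve meridians.

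The individual steps are standard, so the real work lies in the bookkeeping together with one point of principle. The point of principle is that the natural projection centre $P_1^i$ lies on $\cA^{\xi^i}$: I would justify, as indicated in the text, that after blowing up $P_1^i$ and deleting the auxiliary lines $T_1,\dots,T_4$ over the four double points off the lines through $P_1^i$, the restricted projection is already a locally trivial fibration over $\PP^1\setminus\{9\ \text{points}\}$ with fibre $\CC\setminus\{7\ \text{points}\}$ and with the exceptional divisor as section, and that the perturbation of the centre merely converts the former vertical fibres into slanted strands of the eleven-strand diagrams without altering the intrinsic group $\pi_1(X_i)$. Granting this, the braids $\alpha_Q$ are read off Figures~\ref{fig:wiring1}--\ref{fig:wiring2} and the reduction above yields the presentation; the laborious part is the faithful transcription of each $\beta_Q$ and $\Delta_Q^2$ from the diagrams, precisely the computation the \texttt{Sagemath} code of the appendix is meant to automate.
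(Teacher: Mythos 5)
Your argument is correct and follows essentially the same route as the paper, which proves the proposition implicitly in the preceding paragraphs: the Zariski--Van Kampen method applied to the braid monodromy read off the perturbed (vertical-line-free) braided wiring diagram, with the relations \eqref{eq-rel1} reduced to \eqref{eq-rel2} by discarding the redundant $j=m_Q$ relation and the family attached to the skipped discriminant point. Your added justifications (the split exact sequence via the section, centrality of the full twist, and the null-homotopy of the product of all base loops) are exactly the standard facts the paper leaves tacit.
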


Using Proposition~\ref{prop-zariski} and the braided wiring diagrams given above the presentations
can be obtained. They are coded in~\autoref{prt1}. The correspondence between the generators
$\mu_k$, $k=1,\dots,11$, used in~\autoref{prt1}, and the numeration of the twelve lines 
of~\S\ref{sec-combinatorics} is given in \autoref{table:lines}.

\begin{table}[ht]
\begin{center}
\begin{tabular}{|l|l|l|l|l|l|l|l|l|l|l|l|l|}\hline
Line &  $L_7$& $L_8$ &$L_{12}$& $L_{10}$ & $L_{11}$ & $L_2$ & $L_6$ & $L_3$ & $L_9$ & $L_4$ & $L_1$ & $L_5$  \\\hline
Meridian & $\mu_{10}$ & $\mu_{9}$ & $\mu_{5}$ & $\mu_{8}$ & $\mu_{11}$ & $\mu_{4}$ &$\mu_{6}$  
& $\mu_{2}$ & $\mu_{7}$ & $\mu_{3}$ & $\mu_{1}$ & \\\hline
\end{tabular}
\end{center}
\caption{Numeration of lines and meridians}
\label{table:lines}
\end{table}

\subsection{Alexander invariant computations and the AI-isomorphism test}
\label{sec-fg2}
\mbox{}

To avoid the annoying problem that we consider no meridian for the line~$L_5$, from now on
we number the lines with the index of their meridians; the \emph{line at infinity} (former line~$L_5$)
is now the line~$L_0$.

From the relations induced by \eqref{eq-rel2} and the Jacobi relations~\eqref{eq-jacobi} 
we obtain a presentation for the 
truncated Alexander invariants $M^i_k$, $k=1,2$ (see Definition~\ref{dfn-truncated} and Example~\ref{ejem-truncated}).
Note that $M^i_1$ depends only on the combinatorics $\AG$. In particular, in our case, $M^i_1$ is generated by
$\{x_{j,k}:=x_{L_j,L_k}\mid 1\leq j,k\leq 11\}$
and relations $\sum_{L_k<P} x_{j,k}=0$ for each $L_j<P$, and for each~$P$ such that $L_0\not< P$. We assume the convention
that $x_{k,j}=-x_{j,k}$.

This implies that $M^i_1\cong H_2$ via the morphism $x_{j,k}\mapsto x_{L_j}\wedge x_{L_k}$ as a consequence of~\eqref{eq-H2}.
We can choose
$\mathcal{B}\subset\{(j,k)\mid 1\leq j<k\leq 11\}$ such that $\{x_{j,k}\mid (j,k)\in\mathcal{B}\}$
is a basis of $M^i_1$.

As for $M^i_2$, the actual presentations of $G^i$ and the Jacobi relations~\eqref{eq-jacobi} are in this case required. 
Note that  $\rank M^i_2=\rank\gr^0 M^i_2+\rank\gr^1 M^i_2$. With the relations
coming from the group we can see that $\{\sigma_h\cdot x_{j,k}\mid 1\leq h\leq 11,(j,k)\in\mathcal{B}\}$
generate $M^i_2$, subject to the relations induced by~\eqref{eq-jacobi}. 

Let us describe the AI-isomorphism test in a general setting. 
Assume $(G^1,h_1)$ and $(G^2,h_2)$ are fundamental groups of two 
arrangements with the same combinatorics $\mathscr C=(\cL,\cP)$ and with their meridian structures.
Let $\ell=|\mathcal{L}|-1$ and let $L_0,L_1,\dots,L_\ell$ be the lines
and choose a set~$\mathcal{B}$ as above.
Assume there is an isomorphism~$\varphi$ fitting in the following diagram:
\ifcd
\begin{equation}
\begin{tikzcd}[row sep=normal]
G^1\arrow[rr,"\varphi"]\arrow[rd,"h_1" below left=3pt]&&G^2\arrow[dl,"h_2"]\\
&H_1^{\mathscr{C}}&
\end{tikzcd}
\end{equation}
\else
\begin{equation}
\begin{tikzpicture}[description/.style={fill=white,inner sep=2pt},baseline=(current bounding box.center)]
\matrix (m) [matrix of math nodes, row sep=2.5em,
column sep=2.5em, text height=1.5ex, text depth=0.25ex]
{G^1&&G^2\\
&H_1^{\mathscr{C}}&\\};
\path[->,>=angle 90](m-1-1) edge node[above] {$\varphi$} (m-1-3);
\path[->,>=angle 90](m-1-1) edge node[below] {$h_1$} (m-2-2);
\path[->,>=angle 90](m-1-3) edge node[below right] {$h_2$} (m-2-2);
\end{tikzpicture}
\end{equation}
\fi
For $1\leq k\leq\ell$, we  write $\mu_{k,i}$ for 
a generator of $G^i$ which is a meridian whose image by $h_i$ is $x_k:=x_{L_k}\in H_1^{\mathscr C}$.
Note that $G^i$ is generated by $\mu_{k,i}$, $1\leq k\leq\ell$.

\begin{enumerate}
\enet{(AI-\arabic{enumi})}
\item The isomorphism $\varphi$ is determined by 
$\varphi(\mu_{k,1})= \mu_{k,2} \cdot g_k$, where $g_k\in(G^2)'$.
Note that this does not necessarily mean that $\varphi$ preserves meridians, since $\mu_{k,1} \cdot g_k$ is
not necessarily in the same conjugacy class as $\mu_{k,2}$.

\item The isomorphism $\varphi$  induces a $\Lambda/\mathfrak{m}_2$-isomorphism $\varphi_*:M_2^1\to M_2^2$.
Let us denote $x_{j,k}^i:=[\mu_{j,i},\mu_{k,i}]\bmod \Lambda/\mathfrak{m}^2$. If no confusion is likely to arise,
we will drop the super-index. We have
\begin{equation}\label{eq:hom}
x_{j,k}\mapsto [\mu_{j,2}\cdot g_j,\mu_{k,2}\cdot g_{k}]\bmod \Lambda/\mathfrak{m}^2. 
\end{equation}

\item Because of \autoref{lineal}, only the class of $g_j$ in $M_1^2$ is needed, 
$\varphi_*$ is determined by 
\begin{equation}\label{eq:rel1}
g_j\overset{1}{\equiv}\sum_{(h,k)\in\mathcal{B}} n_{j,h,k}\cdot x_{h,k},
\end{equation}
for some $n_{j,h,k}\in\mathbb{Z}$.

\item Take a relation $R\in G^1$. This relation induces a linear equality $R_1$ in 
$M_2^1$, written in terms of $x_{i,j}\in M_2^1$ for $1\leq i< j\leq \ell$.
Using~\eqref{eq:hom} and~\eqref{eq:rel1}, its image under $\varphi_*$
is an equation $R_2$ in the module $M_2^2$, whose unknowns are the variables
$n_{j,h,k}$ of \eqref{eq:rel1}.

\item One can reduce the equation $R_2$ using the relations of $M_2^2$
to obtain equations in $\gr^1 M_2$.
\end{enumerate}

The existence of solutions for this system  is a necessary condition for the  
existence of such an isomorphism $\varphi$. 
The process we have outlined above is referred to as the \emph{AI-isomorphism test} of level~$2$.
If the necessary condition is true, we can use the integer solutions to work in~$M_3$, and so on.
This process at each $M_k$ will be referred to as the \emph{AI-isomorphism test} of level~$k$ for
the pair~$[(G^1,h_1),(G^2,h_2)]$.

\begin{prop}
\label{prop-AI1}
Under the notation above, the pair $[(G^1,h_1),(G^2,h_2)]$  do not pass the 
AI-isomorphism test of level~$2$.
In other words, there is no isomorphism $(G^1,h_1)\to (G^2,h_2)$ inducing 
the identity~$1_{H_1^{\AG}}$.
\end{prop}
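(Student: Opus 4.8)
The plan is to carry out the AI-isomorphism test of level $2$ explicitly and derive a contradiction from the resulting linear system over $\mathbb{Z}$. First I would fix, for each $i=1,2$, the presentation of $G^i$ coming from \autoref{prop-zariski} together with the braided wiring diagrams of \autoref{fig:wiring1} and \autoref{fig:wiring2}, using the meridian-to-line dictionary of \autoref{table:lines}. From these I would extract, for each singular point $Q$, the relations \eqref{eq-rel2} and rewrite each of them as a linear relation $R_1$ in $M_2^1$ expressed in the generators $x_{j,k}$, using the identity $[x_i,x_jx_k]=x_{i,j}+t_jx_{i,k}\bmod G''$ and truncating modulo $\mathfrak{m}^2$. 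Since $\varphi$ is assumed to induce $1_{H_1^{\AG}}$, by step (AI-1) it must send $\mu_{k,1}\mapsto \mu_{k,2}\cdot g_k$ with $g_k\in(G^2)'$, and by \autoref{lineal} only the class $g_k\overset{1}{\equiv}\sum_{(h,k)\in\mathcal{B}}n_{j,h,k}\,x_{h,k}$ in $M_1^2$ matters. The unknowns of the system are thus the integers $n_{j,h,k}$.

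Next I would compute the image under $\varphi_*$ of each group relation. Applying \eqref{eq:hom} and expanding $[\mu_{j,2}g_j,\mu_{k,2}g_k]$ modulo $\Lambda/\mathfrak{m}^2$ via \eqref{eq-key}, namely
\begin{equation*}
[\mu_{j,2}g_j,\mu_{k,2}g_k]\overset{2}{\equiv}x_{j,k}^2+t_k\sigma_j g_k-t_j\sigma_k g_j,
\end{equation*}
I obtain for each relation $R$ an equation $R_2$ in $M_2^2$ whose only unknowns are the $n_{j,h,k}$. Following step (AI-5), I would then reduce $R_2$ modulo the relations of $M_2^2$ (the combinatorial relations \eqref{eq-rels-comb} and the Jacobi relations \eqref{eq-jacobi}) to land in the graded piece $\gr^1 M_2$, where by \autoref{prop-gr-comb} the underlying group is purely combinatorial and the same for both arrangements. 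The point is that although $\gr^1 M_2^1\cong\gr^1 M_2^2$ abstractly, the \emph{difference} between the relation $R_1$ read off from $G^1$ and the transported relation obtained from $G^2$ via $\varphi_*$ produces a genuinely nontrivial inhomogeneous constraint on the $n_{j,h,k}$, because the two wiring diagrams encode genuinely different monodromies. Collecting all such constraints yields a linear system over $\mathbb{Z}$.

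The final step is to show this system is inconsistent. I would assemble all the $R_2$ equations into a single integer matrix equation in the unknowns $n_{j,h,k}$ and verify, by a Smith-normal-form or Gaussian-elimination computation, that it has no solution over $\mathbb{Z}$ (this is exactly where the \texttt{Sagemath} code of the appendix does the heavy lifting). Since passing the test is only a \emph{necessary} condition for the existence of an isomorphism inducing $1_{H_1^{\AG}}$, the failure of solvability immediately gives the conclusion. The homological rigidity established in \autoref{prop-hom-rig} is what makes this reduction legitimate: by \autoref{obs-admis} any isomorphism $\varphi$ must induce an admissible automorphism, which is forced to be $\pm 1_{H_1^{\AG}}$, so it is enough to rule out the $+1$ case here (the $-1$ case being handled separately via a conjugate arrangement).

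The main obstacle I anticipate is bookkeeping rather than conceptual: correctly translating the two braided wiring diagrams into the linear relations $R_1$ in $M_2$ — keeping track of strand orderings versus line labels, of the sign convention $x_{k,j}=-x_{j,k}$, and of the correct $t_j$-weights introduced by \eqref{eq-key} — and then ensuring the reduction to $\gr^1 M_2$ is carried out with a fixed and consistent choice of basis $\mathcal{B}$ for both arrangements so that the two systems can actually be compared. Any inconsistency in these conventions between $G^1$ and $G^2$ would spuriously create or destroy solutions, so the delicate part is guaranteeing that the only difference fed into the linear algebra is the genuine topological difference between the monodromies.
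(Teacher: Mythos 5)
Your proposal is correct and takes essentially the same route as the paper: both set up the level-$2$ AI-test as an integer linear system in the unknowns $n_{j,h,k}$ (using \eqref{eq:hom}, \eqref{eq:rel1} and reduction by the Jacobi and combinatorial relations into $\gr^1 M_2$), and both conclude by a division-free elimination over $\mathbb{Z}$ carried out by the \texttt{Sagemath} code of the appendix. The one nuance is that the system is \emph{not} inconsistent in general --- it has a $12$-dimensional affine space of solutions over $\mathbb{Q}$, the smallest ring admitting solutions being $\mathbb{Z}\left[\frac{1}{5}\right]$ --- so the obstruction is purely integral, which is exactly what your proposed Smith-normal-form check detects.
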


\begin{proof}
One can easily check that $\rank M^i_1=23$. A long computation gives
$\rank\gr^1 M^i=91$, hence each equation $\tilde{R}_2$ in $\gr^1 M^2$ 
induces~$91$ linear equations;
since there are~$32$ such relations, we obtain a linear system of
$2912$ equations in~$23\times 11=253$ unknowns. 
After eliminating trivial equations, we have to deal with a linear system of $930$ equations
with $253$ variables. We have attacked this system with~\texttt{Sagemath}~\cite{sage67},
using pivoting methods where divisions are not allowed. 

We obtain that
the solutions over~$\mathbb{Q}$ is an affine space of dimension~$12$,
but the smallest ring where solutions exist is $\mathbb{Z}\left[\frac{1}{5}\right]$.
In particular, no integer solution exists. These computations, together with the presentation of $M^i_1$ 
and $M^i_2$  took 662.49s of CPU time on an 8 double core Athlon 
processor computer with 128GB of RAM.
\end{proof}

We can apply the same ideas to the groups $G^4$ and $G^1$.
Let us recall that, even though $G^1$ and $G^4$ are isomorphic, their meridian
structures are reversed, i.e. the known isomorphism induces $-1_{H_1^{\AG}}$.

\begin{prop}
\label{prop-AI2}
Under the notation above, the pair $[(G^4,h_4),(G^2,h_2)]$  do not pass the 
AI-isomorphism test of level~$2$.
In other words, there is no isomorphism $(G^4,h_4)\to (G^2,h_2)$ inducing 
the identity~$1_{H_1^{\AG}}$.
\end{prop}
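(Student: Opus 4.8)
The plan is to reduce Proposition~\ref{prop-AI2} directly to the computation already carried out for Proposition~\ref{prop-AI1}, exploiting the complex-conjugation symmetry among the four realizations rather than rerunning the full \texttt{Sagemath} linear algebra on a new pair. Recall from \S\ref{sec-fg1} that the conjugation automorphism $[x:y:z]\mapsto[\bar x:\bar y:\bar z]$ of $\PP^2$ sends $\cA^{\xi^i}$ to $\cA^{\xi^{5-i}}$, giving a homeomorphism $X_4\cong X_1$ and hence an isomorphism $c\colon G^4\to G^1$. The key point, noted already in \S\ref{sec-fg1}, is that this homeomorphism reverses the meridian structure: under the identifications $h_4,h_1$ it induces $-1_{H_1^{\AG}}$ on homology. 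So $c$ is an isomorphism $(G^4,h_4)\to(G^1,-h_1)$, where $-h_1$ denotes the meridian structure $h_1$ composed with $-1_{H_1}$.

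The main step is to show that passing the AI-isomorphism test for $[(G^4,h_4),(G^2,h_2)]$ inducing $+1_{H_1}$ is equivalent to passing the test for $[(G^1,h_1),(G^2,h_2)]$ inducing $-1_{H_1}$, and then to observe that the latter is precisely what the computation in Proposition~\ref{prop-AI1} already settles. Suppose $\varphi\colon(G^4,h_4)\to(G^2,h_2)$ were an isomorphism inducing $1_{H_1^{\AG}}$. Composing with $c^{-1}\colon(G^1,h_1)\to(G^4,h_4)$ (which induces $-1_{H_1}$, since $c$ induces $-1$ and $-1$ is an involution) produces an isomorphism $\varphi\circ c^{-1}\colon(G^1,h_1)\to(G^2,h_2)$ inducing $(+1)\circ(-1)=-1_{H_1^{\AG}}$. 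Thus it suffices to rule out isomorphisms $(G^1,h_1)\to(G^2,h_2)$ inducing $-1_{H_1}$.

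The remaining work is to run the level-$2$ AI-isomorphism test for the map inducing $-1$ on homology. The formalism of steps (AI-1)--(AI-5) goes through verbatim after replacing the meridian-matching $\varphi(\mu_{k,1})=\mu_{k,2}\cdot g_k$ by $\varphi(\mu_{k,1})=\mu_{k,2}^{-1}\cdot g_k$ (so that $h_2\circ\varphi=-h_1$), with $g_k\in(G^2)'$; the induced map $\varphi_*\colon M_2^1\to M_2^2$ is still $\Lambda/\mathfrak{m}^2$-semilinear for the sign change on $H_1$, and by \autoref{lineal} it is again determined by integers $n_{j,h,k}$ as in \eqref{eq:rel1}. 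Each of the $32$ group relations again yields an equation in $\gr^1 M$, producing a linear system of the same size ($930$ nontrivial equations in $253$ unknowns after elimination), and one feeds it to the same \texttt{Sagemath} routine using division-free pivoting.

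The hard part, as in Proposition~\ref{prop-AI1}, is not conceptual but the honest verification that this sign-changed system has rational solutions but no integral ones---concretely, that its solution space is an affine $\mathbb{Q}$-space whose minimal ring of definition is $\mathbb{Z}[\tfrac15]$, so that $5$ is inverted in every solution and no $\mathbb{Z}$-solution exists. I would expect the outcome to mirror Proposition~\ref{prop-AI1} exactly, the obstruction again living in the $5$-torsion reflecting the cyclotomic field $\QQ(\xi)$ of the fifth roots of unity; the denominator $5$ is the arithmetic fingerprint distinguishing $\cA^{\xi}$ from $\cA^{\xi^2}$. Granting that computation, no isomorphism $(G^1,h_1)\to(G^2,h_2)$ can induce $-1_{H_1}$, and by the reduction above no isomorphism $(G^4,h_4)\to(G^2,h_2)$ can induce $+1_{H_1}$, which is the assertion of the proposition.
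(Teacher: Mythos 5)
Your reduction is sound, and it is in fact the mirror image of the symmetry the paper itself exploits one level up: in the proof of \autoref{thm-isom} the paper converts a hypothetical isomorphism $G^1\to G^2$ inducing $-1_{H_1^{\AG}}$ into one $G^4\to G^2$ inducing the identity, whereas you convert the $(G^4,+1)$ statement back into a $(G^1,-1)$ statement. The paper's own proof of \autoref{prop-AI2} takes the more pedestrian route: it reruns the AI-test machinery verbatim with the presentation of $G^4$ as input, reusing the already-computed $M_2^2$, so steps (AI-1)--(AI-5) apply without modification. Your route keeps the presentation of $G^1$ but twists the ansatz to $\varphi(\mu_{k,1})=\mu_{k,2}^{-1}g_k$, which forces a semilinear variant of \eqref{eq-key} (the $\Lambda$-action is twisted by $t_i\mapsto t_i^{-1}$, harmless at level $2$ since $t_i^{-1}\equiv 1-\s_i\bmod\mathfrak{m}^2$, and \autoref{lineal} still applies); what this buys is that you avoid computing the conjugate wiring diagram and presentation for $\cA^{\xi^4}$, at the cost of redoing the module formulas with signs. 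One point you use implicitly and should state: the two tests are equivalent not merely because the existence of the two kinds of isomorphisms is equivalent, but because the conjugation isomorphism $c$ induces an honest $\mathbb{Z}$-defined isomorphism of truncated Alexander invariants intertwining the two linear systems, so integral (resp.\ rational) solutions of one correspond bijectively to those of the other. The caveat, shared with the paper, is that the proof bottoms out in a machine computation --- no integral solutions, minimal ring $\mathbb{Z}\left[\frac{1}{5}\right]$ --- which you specify correctly but only ``expect'' to fail integrally; that is precisely what the paper's reported computation confirms, so nothing in your argument would fail, though as written the decisive step is granted rather than verified (and your count of $930$ equations for the sign-twisted system is a guess: only the $253$ unknowns are forced, while the number of surviving equations after elimination could differ).
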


\begin{proof}
We repeat the above procedure skipping the computations about $M_2^2$, already done. After
97.69s of  CPU time, we obtain similar results as in \autoref{prop-AI2}.
\end{proof}

With all the above we are ready to prove the main result.

\begin{thm}
\label{thm-isom} 
The fundamental groups $G^1$ and $G^2$ of the complement to the realizations $\cA^{\xi}$ and $\cA^{\xi^2}$ 
of the combinatorics $\AG$ are not isomorphic.
\end{thm}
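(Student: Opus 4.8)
The plan is to combine the homological rigidity result (\autoref{prop-hom-rig}) with the two failed AI-isomorphism tests (\autoref{prop-AI1} and \autoref{prop-AI2}) to rule out \emph{all} possible isomorphisms. Suppose, for contradiction, that there exists an isomorphism $\Phi:G^1\to G^2$. By \autoref{obs-admis}, once we fix the meridian structures $h_1,h_2$ on $G^1$ and $G^2$, the induced automorphism $\Phi_*:H_1^{\AG}\to H_1^{\AG}$ must be admissible. But \autoref{prop-hom-rig} tells us that the combinatorics $\AG$ is homologically rigid and, since its automorphism group is trivial, the only admissible automorphisms of $H_1^{\AG}$ are $\pm 1_{H_1^{\AG}}$. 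Therefore $\Phi_*=+1$ or $\Phi_*=-1$, and it suffices to exclude each of these two cases.

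First I would dispose of the case $\Phi_*=+1_{H_1^{\AG}}$. Here $\Phi$ is an isomorphism $(G^1,h_1)\to(G^2,h_2)$ inducing the identity on homology, which is precisely the situation ruled out by \autoref{prop-AI1}: the pair $[(G^1,h_1),(G^2,h_2)]$ fails the AI-isomorphism test of level~$2$, so no such $\Phi$ can exist. Next I would dispose of the case $\Phi_*=-1_{H_1^{\AG}}$. The key observation is that $G^1\cong G^4$ via the complex-conjugation homeomorphism $[x:y:z]\mapsto[\bar x:\bar y:\bar z]$, and this known isomorphism reverses the meridian structure, i.e.\ it induces $-1_{H_1^{\AG}}$. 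Composing $\Phi$ with this conjugation isomorphism $G^4\to G^1$ therefore yields an isomorphism $(G^4,h_4)\to(G^2,h_2)$ whose induced map on $H_1^{\AG}$ is $(-1)\circ(-1)=+1_{H_1^{\AG}}$, that is, the identity. But \autoref{prop-AI2} asserts that the pair $[(G^4,h_4),(G^2,h_2)]$ also fails the AI-isomorphism test of level~$2$, so no isomorphism inducing the identity exists in this case either. This contradiction rules out $\Phi_*=-1$.

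Since both admissible possibilities $\Phi_*=\pm 1_{H_1^{\AG}}$ lead to a contradiction and homological rigidity guarantees these are the only options, no isomorphism $\Phi:G^1\to G^2$ can exist, proving the theorem. The conceptual heart of the argument is already packaged in the preceding propositions: homological rigidity collapses the infinitude of candidate isomorphisms down to the two sign-automorphisms, and the conjugation symmetry reduces the $-1$ case to a second instance of the same Alexander-invariant obstruction. I expect the main substantive obstacle to lie not in this assembly step but in the two AI-isomorphism computations themselves (already carried out in \autoref{prop-AI1} and \autoref{prop-AI2}), where the nonexistence of integer solutions hinges on the solution space living only over $\mathbb{Z}\left[\frac{1}{5}\right]$ rather than over $\mathbb{Z}$. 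The only delicate point in the assembly is ensuring that the meridian structures are correctly tracked through the composition so that the signs on $H_1^{\AG}$ multiply as claimed.
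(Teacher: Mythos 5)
Your proof is correct and takes essentially the same route as the paper: homological rigidity (\autoref{prop-hom-rig}) together with \autoref{obs-admis} reduces any isomorphism to inducing $\pm 1_{H_1^{\AG}}$, the $+1$ case is excluded by \autoref{prop-AI1}, and the $-1$ case is reduced to \autoref{prop-AI2} by composing with the complex-conjugation isomorphism $G^4\to G^1$, exactly as in the paper's argument. Your explicit sign-tracking $(-1)\circ(-1)=+1$ through the meridian structures matches the paper's (more tersely stated) composition step.
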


\begin{proof}
Consider the presentations of $G^i$ given in \S\ref{sec-combinatorics}, which provide meridian structures
in a natural way. Assume $\varphi:G^1\to G^2$ is an arbitrary isomorphism. By \autoref{prop-hom-rig}, $\AG$ is 
homologically rigid and $\Aut_\AG=1_{\AG}$. As a consequence, $\varphi$ induces $\varphi_*=\pm 1_{H_1^{\AG}}$,
see~\autoref{obs-admis}. 
If $\varphi_*=1_{H_1^{\AG}}$, then \autoref{prop-AI1} results in a contradiction.
Assume $\varphi_*=-1_{H_1^{\AG}}$. Note that, as mentioned at the beginning 
of~\eqref{sec-fg1}, the conjugation morphism $\psi$ induces an automorphism of groups whose associated 
morphism $\psi_*$ equals $-1_{H^1_{\AG}}$. Composing $\psi\circ \varphi$ one obtains an isomorphism 
$\varphi\circ \psi :G^4\to G^2$ satisfying $(\varphi\circ \psi)_*=1_{H^1_{\AG}}$, 
which can be disregarded using \autoref{prop-AI2}. This ends the proof.
\end{proof}

\providecommand{\bysame}{\leavevmode\hbox to3em{\hrulefill}\thinspace}
\providecommand{\MR}{\relax\ifhmode\unskip\space\fi MR }
\providecommand{\MRhref}[2]{%
  \href{http://www.ams.org/mathscinet-getitem?mr=#1}{#2}
}
\providecommand{\href}[2]{#2}

\appendix
\section{Sagemath code}\label{sec-code}
\begin{prt}\label{prt1}
Coding the wiring diagrams
\end{prt}
The following code presents the two wiring diagrams, and the groups $\mathbb{B}_n,\mathbb{F}_n$,
for $n=11$. For further use, we introduce the permutations \texttt{orden$\pm$} relating the order at
the base vertical line of the wiring diagram. The lists \texttt{wiring$\pm$} have one element
for each crossing. Each element contains two parts: the previous braid and the lines involved
in the crossing.

\begin{lstlisting}[breaklines=true]
n=11
B=BraidGroup(n)
F=FreeGroup(n)

ordenPos=Permutation([1,3,7,5,8,11,4,10,9,6,2])
wiringPos=[[(), [10, 9]], [(), [5, 8]], [(), [5, 11, 4, 9]], [(), [5, 10]], 
[(),[5, 6]], [(), [5, 2]], [(-7, -8), [7, 8]], [(), [7, 9]], [(), [7, 4]],
[(), [7, 10, 6]], [(), [7, 11, 2]], [(-8, -4, 7), [6, 11]], 
[(-7, -5,-6, -4, 8), [3, 8, 11]], [(), [3, 4]], [(), [3, 10]], 
[(), [3, 9, 2,6]], [(3,), [8, 10]], [(4, 5, 2, 3, 4, -2), [1, 8, 4, 6]], 
[(), [1, 11,10]], [(), [1, 2]], [(), [1, 9]], [(-3, -4), [8, 2]]]

ordenNeg=Permutation([1,7,5,3,4,10,11,8,2,6,9])
wiringNeg=[[(), [3, 4]], [(), [3, 10]], [(), [3, 11, 8]], [(), [3, 2, 6, 9]],
[(-5, -6, -7, -4, -5), [5, 8]], [(), [5, 11, 4, 9]], [(), [5, 10]], 
[(),[5, 6]], [(), [5, 2]], [(-4, 3, 6), [11, 6]], [(-6, -5), [9, 10]],
 [(4,5, 4, 8, -7, 6), [2, 8]], [(7,), [7, 4]], [(), [7, 10, 6]], 
[(), [7,8]], [(), [7, 9]], [(), [7, 2, 11]], [(-4, 5, -6, -3), [1, 4, 8, 6]],
[(), [1, 9]], [(), [1, 11, 10]], [(), [1, 2]], [(-2, -4), [8, 10]]]
\end{lstlisting}

\begin{prt}
Core of the computation
\end{prt}

\begin{paso}\label{paso1}
We start constructing the presentations of the groups using the function in~\ref{S-relations}.
We construct the list \texttt{inc} representing a family 
$\mathcal{B}\subset\{(i,j)\mid 1\leq i<n, i< j\leq n\}$ such that
$\{x_{i,j}\mid (i,j)\in\mathcal{B}\}$ is a basis of the truncated Alexander invariant~$M_1$. 
\end{paso}
\begin{lstlisting}[breaklines=true]
GrupoListaPos=list_of_relations_2(ordenPos,wiringPos)
GrupoListaNeg=list_of_relations_2(ordenNeg,wiringNeg)
combPos=[sorted(v[0]) for v in GrupoListaPos]
combNeg=[sorted(v[0]) for v in GrupoListaNeg]
print "Do combinatorics coincide? ",sorted(combPos)==sorted(combNeg)
comb=[]
for v in combPos:
    for j in v[1:]:
        comb.append([v[0],j])
tot=[tuple(sorted(_.list())) for _ in Subsets([1..n],2)] 
inc=[_ for _ in tot if list(_) not in comb]
\end{lstlisting}

\begin{paso}
We construct several rings. We start with $S:=\mathbb{Z}[x_{k,i,j}\mid 1\leq k\leq n,(i,j)\in\mathcal{B}]$.
The variables of this ring are the unknowns of the final linear system. We define
a dictionnary \texttt{YY} to track the variables of~$S$ using the the triples $(k,i,j)$.
The ring \texttt{LR} is the Laurent polynomial ring $\mathbb{Z}[t_1^{\pm 1},\dots,t_n^{\pm 1}]=\mathbb{Z}[H]$
and \texttt{LRv} is  $S[t_1^{\pm 1}\dots,t_n^{\pm 1}]=\mathbb{Z}[H]$ while
$R:=\mathbb{Z}[t_1,\dots,t_n]$, $T:=S[[\sigma_1,\dots,\sigma_n]]$ and $T_0:=\mathbb{Z}[[\sigma_1,\dots,\sigma_n]]$.
The dictionnaries \texttt{dic} and \texttt{dicv} realize the substitution $t_i\mapsto 1+\sigma_i$
in each ring.
\end{paso}

\begin{lstlisting}[breaklines=true]
xx=var(['v%da%db%d'% (k,i,j) for k in [1..n] for (i,j) in inc])
S=PolynomialRing(ZZ,xx)
YY={}
for k in [1..n]:
    for l in range(len(inc)):
        YY[k,inc[l][0],inc[l][1]]=S.gen((k-1)*len(inc)+l)
LR=LaurentPolynomialRing(ZZ,'t',n)
LRv=LR.change_ring(S)
R=LR.polynomial_ring()
T=PowerSeriesRing(S,'s',num_gens=n,default_prec=2)
T0=T.change_ring(ZZ)
HomLRT0=R.hom([1+v for v in T0.gens()],codomain=T0,check=True)
dic={v:HomLRT0(v) for v in LR.gens()}
dicv={v:HomLRT0(v).change_ring(S) for v in LRv.gens()} 
\end{lstlisting}

\begin{paso}
We create the free module $M$ with basis $x_{i,j}$ and base ring
\texttt{LR}. The dictionnary \texttt{XX} expresses any $[x_i^{\pm 1},x_j^{\pm 1}]$ in the module. 
\end{paso}

\begin{lstlisting}[breaklines=true]
M=FreeModule(LR,n*(n-1)/2)
MT=M.change_ring(T0)
Mv=M.change_ring(T)
XX={tot[i]:M.gen(i) for i in range(n*(n-1)/2)}
for i in range(1,n+1):
    XX[i,i]=M(0)
    XX[-i,-i]=M(0)
    XX[i,-i]=M(0)
    XX[-i,i]=M(0)
    for j in range(i+1,n+1):
        XX[-i,j]=-tt([i],LR)^-1*XX[i,j]
        XX[i,-j]=-tt([j],LR)^-1*XX[i,j]
        XX[-i,-j]=tt([i],LR)^-1*tt([j],LR)^-1*XX[i,j]
        XX[j,i]=-XX[i,j]
        XX[-j,i]=-XX[i,-j]
        XX[j,-i]=-XX[-i,j]
        XX[-j,-i]=-XX[-i,-j] 
\end{lstlisting}

\begin{paso}
We translate the relations of both groups as elements of the module~$M$. We
write down both sets of relations as matrices (the number of rows
is the number of relations and columns related to all the $x_{i,j}$
and we translate them into the series ring.
The matrix \texttt{SustNeg} expresses all the
$x_{i,j}$ in terms of those such that $(i,j)\in\mathcal{B}$.
\end{paso}

\begin{lstlisting}[breaklines=true]
relAlexPos=[]
for v in GrupoListaPos:
    relAlexPos+=CommCyclic1(v,F,LR,XX,dir='LR')
print "Alexander invariant for first group"
relAlexNeg=[]
for v in GrupoListaNeg:
    relAlexNeg+=CommCyclic1(v,F,LR,XX,dir='LR')
nrels=len(relAlexNeg)
print "Alexander invariant for the second group"
relAlexSeriePos=Matrix([vector(w.subs(dic) for w in v.list()) for v in relAlexPos])
relAlexSerieNeg=Matrix([vector(w.subs(dic) for w in v.list()) for v in relAlexNeg])
SustNeg=EscalonarM2(relAlexSerieNeg,n)
print "The relations are used to write every one in terms of the basis inc" 
\end{lstlisting}

\begin{paso}\label{pasomorfismo}
The main point is to test if there is a homomorphism $\varphi:G_1\to G_2$
such that $x_k\in G_1$ is sent to
$x_k\prod_{(i,j)\in\mathcal{B}}[x_i,x_j]^{x_{k,i,j}}\bmod \tilde{\gamma}_4(G)$.
We express the image of $[x_i,x_j]$ as an element in~$M\otimes T$
(this is by far the most long computation!).
With this data, we compute the image
of the relations of $G_1$, which will be now
words in the truncated Alexander invariant $M_2\otimes T$.
 
We write these elements only in terms of $x_{i,j}$, $(i,j)\in\mathcal{B}$.
The next step is express the relations as linear combination
of $\sigma_k x_{i,j}$, $1\leq k\leq n$ and $(i,j)\in\mathcal{B}$,
with coefficients in~$S$.
\end{paso}

\begin{lstlisting}[breaklines=true]
TotImagenMorfismo=[]
for u in tot:
    v=ImagenMorfismo(u,XX,YY,S,LRv,dicv,inc)
    TotImagenMorfismo.append(v)
    if u[-1]==n:
        print "Finished pairs with line ", u[0]
        
relAlexVarPos=[]
for ser in relAlexSeriePos:
    vct=0
    for j in range(n*(n-1)/2):
        vct+=ser[j].change_ring(S)*TotImagenMorfismo[j]
    relAlexVarPos.append(vct)
relAlexVarPos=MatTrunc(Matrix(relAlexVarPos))
relAlexVarPos=MatTrunc(relAlexVarPos*SustNeg.change_ring(T).transpose())
print "Images of relations of first group in terms of the basis inc in the second group, done"

relZPos=[]
for prueba in relAlexVarPos:
    relZPos.append(vector(flatten([[pr1.derivative(v).constant_coefficient() for v in T.gens()] for pr1 in prueba])))
relZPos=Matrix(nrels,relZPos)
print "Integral matrix from relations"
\end{lstlisting}

\begin{paso}
In order to compute $M_2$ we need to add the relations given by Jacobi identities
as $\mathbb{Z}$-linear combinations in $\{\sigma_k x_{i,j}\mid 1\leq k\leq n,(i,j)\in\mathcal{B}\}$.
We quotient the previous free module by this relation. In this case we obtain 
a free abelian module; we pass from $23\times 11=253$ relations to $91$.
\end{paso}

\begin{lstlisting}[breaklines=true]
JCB=[]
for i in range(1,n-1):
    for j in range(i+1,n):
        for k in range(j+1,n+1):
            JCB+=[T0.gen(i-1)*XX[j,k].change_ring(T0)+T0.gen(j-1)*XX[k,i].change_ring(T0)+T0.gen(k-1)*XX[i,j].change_ring(T0)]
JCB=Matrix(JCB)
JCB1=MatTrunc(JCB*SustNeg.transpose())
JCB2=Matrix(JCB.nrows(),flatten([[v.derivative(w).constant_coefficient() for w in T0.gens()] for v in JCB1.list()]))
print "Simplified Jacobi relations as integral matrix done"
SF,U,V=JCB2.smith_form()
Jdiag=[SF[v,v] for v in range(SF.rank())]
if Set(Jdiag)!=Set([1]):
    print "Torsion at level 2"
print "Smith form of Jacobi relations, done", SF.ncols()-SF.rank()," generators left" 
\end{lstlisting}

\begin{paso}
We write down the relations taking into account Jacobi relations. 
The equations are all the coefficients in the basis. We solve
the linear equation of $930$~equations and $253$~unknowns.
The system has relations over~$\mathbb{Q}$ but not over~$\mathbb{Z}$
(the smallest ring is $\mathbb{Z}\left[\frac{1}{5}\right]$).  
\end{paso}

\begin{lstlisting}[breaklines=true]
eqs=(relZPos*V).matrix_from_columns([SF.rank()..JCB2.ncols()-1])
eqs1=[_ for _ in list(Set(eqs.list())) if _!=0]
print len(eqs1)," equations in ",len(inc)*n," unknowns."
Aeq=Matrix([vector(eq.monomial_coefficient(v) for v in S.gens()) for eq in eqs1])
Beq=vector(-eq.constant_coefficient() for eq in eqs1)
print "Ranks: ", Aeq.rank(),Aeq.augment(Beq).rank()
U1,U2,U3=Aeq.smith_form()
diageq=[U1[v,v] for v in range(U1.rank())]
diageq1=[_ for _ in diageq if _!=1]
print "Ones in the diagonal of the smith form: ",diageq.count(1)
print "Rest of the diagonal of the smith form: ",diageq1
print "Non integers in the Particular Solution:", [_ for _ in U1.solve_right(U2*Beq) if _ not in ZZ]
\end{lstlisting}

\begin{prt}
Some auxiliar functions.
\end{prt}

\begin{enumerate}[leftmargin=*]
\enet{(\texttt{S}\arabic{enumi})}
\item \texttt{Delta}
\begin{description}
\item[Input] A list of consecutive numbers $[i,\dots,j]$ and a braid group.
\item[Output] The half-twist braid involving the strands $i,\dots,j$. 
\end{description}
\begin{lstlisting}[breaklines=true]
def Delta(lista,B):
    res=B(1)
    l=copy(lista)[:-1]
    while len(l)>0:
        res=res*B(l)
        l=l[:-1]
    return res
\end{lstlisting}

\item \texttt{clean\_conj}
\begin{description}
\item[Input] A list of numbers $[i,\dots,j]$ and a free group. The list of numbers represents
the Tietze representation of an element~$x$ in the free group (with generators~$g_j$).
\item[Output] It returns \texttt{None} if $x$ is not conjugate to a generator. If it is,
it returns a list $[[i],[j_1,\dots,j_r]]$ such that $x=x_i^{x_{j_1\cdot\ldots\cdot x_{j_r}}}$. 
\end{description}
\begin{lstlisting}[breaklines=true]
def clean_conj(lista,F):
    n=len(lista)
    if n%2==0:
        print "Not conjugate to a generator"
        return None
    m=ZZ((n-1)/2)
    a=lista[m]
    b=lista[m+1:]
    if F(b)*F(lista[:m])!=F(1):
        print "Not conjugate to a generator"
        return None
    return [a,b]        
\end{lstlisting}
\item\label{S-relations} \texttt{list\_of\_relations\_2}
\begin{description}
\item[Input] A permutation to indicate the order of the lines in the beginning
and a wiring diagram, see \autoref{prt1}. For each crossing
we give a list with two members: the braid from the previous crossing, and 
the lines in the crossing. We assume a generic wiring diagram.
\item[Output] It returns the presentation of the group, as a list of elements with two entries;
the first one is a list $[i_1,\dots,i_r]$, $r\geq 2$; the second one is a list of
$r$ lists, which represent the Tietze representation of words $w_1,\dots,w_r$. This element
means that $x_{i_1}^{w_1}\cdot\ldots\cdot x_{i_r}^{w_r}$ commutes with 
$x_{i_1}^{w_1},\ldots, x_{i_r}^{w_r}$ (note that in other papers the reversed product is used).
\end{description}

\begin{lstlisting}[breaklines=true]
def list_of_relations_2(orden0,wiring0):
    wiring=[[B(_[0]),_[1]]for _ in wiring0]
    res1=[]
    trenza=B(1)
    orden=copy(orden0)
    for cruce in wiring:
        u,v=cruce
        orden=(u^-1).permutation()*orden
        trenza=trenza*u
        w=[orden.inverse()(_) for _ in v]
        res1.append([trenza,v,w,orden])
        trw=Delta(w,B)
        trenza=trenza*trw
        orden=trw.permutation().inverse()*orden
    final=[]
    for aa in res1:
        lis=[]
        for i in aa[2]:
            accion=[_.sign()*orden0(_.abs()) for _ in (F([i])*aa[0]^-1).Tietze()]
            listas=clean_conj(accion,F)
            if listas==None:
                print 'Problems'
                return None
            else:
                lis+=[listas]
        datos1=[[_[0] for _ in lis],[_[1] for _ in lis]]
        final+=[datos1]
    return final
\end{lstlisting}
\item \texttt{TietzeList}
\begin{description}
\item[Input] A word in the free group.
\item[Output] A list with its Tietze representation.
\end{description}

\begin{lstlisting}[breaklines=true]
def TietzeList(w):
    return list(w.Tietze())
\end{lstlisting}
\item \texttt{CommProd1}
\begin{description}
\item[Input] A number $a\in\{1,\dots,n\}$, a list representing a element~$x$ in the free group,
the free group $\mathbb{F}_n$ and the ring of Laurent polynomials in $n$ variables
and a dictionnary \texttt{X} associating to each $[i,j]$ an element in the free module
over the Laurent ring polynomial.
\item[Output] The element $[g_a,x]$ written as an element of the Alexander invariant~$M$.
\end{description}

\begin{lstlisting}[breaklines=true]
def CommProd1(a,l1,F,LR,XX):
    ll1=list(F(l1).Tietze())
    if len(ll1)==0:
        return M(0)
    return XX[a,ll1[0]]+tt([ll1[0]],LR)*CommProd1(a,ll1[1:],F,LR,XX)
\end{lstlisting}

\item \texttt{CommCyclic1}
\begin{description}
\item[Input] A list~$L$, like the output of \texttt{list\_of\_relations\_2},
the free group $\mathbb{F}_n$ and the ring of Laurent polynomials in $n$ variables,
a dictionnary \texttt{X} associating to each $[i,j]$ an element in the free module
over the Laurent ring polynomial
and a variable indicating the direction of the product yielding the local central
element (set by default to \texttt{RightLeft}).
\item[Output] The list of elements in the Alexander invariant induced by the relations
given by~$L$.
\end{description}

\begin{lstlisting}[breaklines=true]
def CommCyclic1(L,F,LR,XX,dir='RL'):
    res=[]
    r=len(L[0])
    comb=L[0]
    mn=comb.index(min(comb))
    L1=[v[mn:]+v[:mn] for v in L]
    LF=[F([-u for u in reversed(L1[1][j])]+[L1[0][j]]+L1[1][j]) for j in range(r)]
    for j in range(1,r):
        u=F(L1[1][j])
        if dir=='RL':
            LFp=[_ for _ in reversed(LF[j+1:]+LF[:j])]
        elif dir=='LR':
            LFp=LF[j+1:]+LF[:j]
        w=u*prod(LFp)*u^-1
        res+=[CommProd1(L1[0][j],list(w.Tietze()),F,LR,XX)]
    return res
\end{lstlisting}

\item \texttt{tt}
\begin{description}
\item[Input] A list of non-zero integers and a ring of Laurent polynomials in $n$ variables.
\item[Output] The monomial defined by the list.
\end{description}

\begin{lstlisting}[breaklines=true]
def tt(i,ring):
    return prod([LR.gen(ZZ(j).abs()-1)^(ZZ(j).sign()) for j in i])
\end{lstlisting}

\item \texttt{MatTrunc}
\begin{description}
\item[Input] A matrix with coefficient in a power series ring.
\item[Output] The matrix where each entry has been truncated to level~$2$.
\end{description}

\begin{lstlisting}[breaklines=true]
def MatTrunc(mat):
    A=mat
    m=A.nrows()
    return Matrix(m,[v.truncate(2) for v in A.list()])
\end{lstlisting}

\item \texttt{EscalonarM2}
\begin{description}
\item[Input] A matrix over the power series ring representing a list of homogeneous linear equations
with unknowns $x_{i,j}$  and the number of lines~$n$.
\item[Output] Let $\mathcal{B}$ as in \autoref{paso1}; 
each column of this matrix is the expression of the corresponding $x_{i,j}$
in terms of the elements~$x_{k,l}$, $(k,l)\in\mathcal{B}$ in the Alexander invariant over the power series ring.
\end{description}

\begin{lstlisting}[breaklines=true]
def EscalonarM2(matriz,n):
    U=matriz
    U0=Matrix(U.nrows(),[v.constant_coefficient() for v in U.list()])
    A,B=U0.echelon_form(transformation=True)
    U1=B*U
    Apivot=[]
    j=0
    for i in range(A.rank()):
        while A[i,j]==0:
            j=j+1
        Apivot.append(j)
    for i in range(A.rank()):
        j=Apivot[i]
        U1.rescale_row(i,U1[i,j]^-1)
        for k in range(i)+range(i+1,A.nrows()):
            U1.add_multiple_of_row(k,i,-U1[k,j])
    SustNeg=identity_matrix(T0,n*(n-1)/2)
    for i in range(A.rank()):
        for  j in range(n*(n-1)/2):
            SustNeg[j,Apivot[i]]=-U1[i,j].truncate(2).polynomial()
    SustNeg=SustNeg.delete_rows(Apivot)
    return SustNeg
\end{lstlisting}
\item \texttt{ImagenMorfismo}
\begin{description}
\item[Input] A list $[i,j]$ representing the commutator $x_{i,j}$, 
a dictionnary \texttt{XX} associating to each $[i,j]$ an element in the free module
over the Laurent ring polynomial, a dictionnary \texttt{YY} associating to each $[k,i,j]$ 
the unknown $x_{k,i,j}$, representing $[x_k,[x_i,x_j]]\equiv (t_k-1)x_{i,j}\equiv \sigma_kx_{i,j}$,
$S$ is a polynomial ring, \texttt{LRv} is the Laurent Ring with coefficients in the unknowns,
\texttt{dicv} is the evaluation $t_i\mapsto 1+\sigma_i$ and \texttt{inc} is as in \autoref{paso1}.
\item[Output] We consider the group of \autoref{pasomorfismo}.
This function uses it to express the image of $x_{i,j}$. 
\end{description}

\begin{lstlisting}[breaklines=true]
def ImagenMorfismo(L,XX,YY,S,LRv,dicv,inc):
    i,j=L
    res=XX[L].change_ring(S)
    ai=tt([i],LRv).subs(dicv)
    aj=tt([j],LRv).subs(dicv)
    aij=(ai*aj).truncate(2)
    bij=(ai-aij)
    cij=(aj-aij)
    res+=sum([(bij*YY[i,u,v]-cij*YY[j,u,v])*XX[u,v].change_ring(S) for (u,v) in inc])
    return res
\end{lstlisting}
\end{enumerate}

\end{document}